\newcommand{\Eset}{\mathbb{E}}
\newcommand{\Nset}{\mathbb{N}}
\newcommand{\Rset}{\mathbb{R}}
\newcommand{\Bcal}{{\cal B}}
\newcommand{\Ecal}{{\cal E}}
\newcommand{\Fcal}{{\cal F}}
\newcommand{\Gcal}{{\cal G}}
\newcommand{\Ncal}{{\cal N}}
\newcommand{\Pcal}{{\cal P}}
\newcommand{\Rcal}{{\cal R}}
\newcommand{\Scal}{{\cal S}}
\newcommand{\Tcal}{{\cal T}}
\newcommand{\Ucal}{{\cal U}}
\newcommand{\Vcal}{{\cal V}}
\newcommand{\Xcal}{{\cal X}}
\newcommand{\Abf}{{\bf A}}
\newcommand{\Ebf}{{\bf E}}
\newcommand{\Hbf}{{\bf H}}
\newcommand{\Ibf}{{\bf I}}
\newcommand{\Mbf}{{\bf M}}
\newcommand{\Qbf}{{\bf Q}}
\newcommand{\Wbf}{{\bf W}}
\newcommand{\Ybf}{{\bf Y}}
\newcommand{\btheta}{{\bar{\theta}}}
\newcommand{\1}{{\mathbf{1}}}
\newcommand{\ebar}{{\bar{e}}}
\newtheorem{lem}{Lemma}
\newtheorem{thm}{Theorem}
\newtheorem{assump}{Assumption}
\theoremstyle{remark}
\newtheorem{remark}{Remark}
\begin{document}

\twocolumn[
\icmltitle{Finite-Time Analysis of Distributed TD(0) with Linear Function Approximation for Multi-Agent Reinforcement Learning}

\begin{icmlauthorlist}
\icmlauthor{Thinh T. Doan}{to,goo}
\icmlauthor{Siva Theja Maguluri}{to}
\icmlauthor{Justin Romberg}{goo}
\end{icmlauthorlist}

\icmlaffiliation{to}{School of Industrial and Systems Engineering}
\icmlaffiliation{goo}{School of Electrical and Computer Engineering, Georgia Instiute of Technology, GA, 30332, USA}

\icmlcorrespondingauthor{Thinh T. Doan}{thinhdoan@gatech.edu}

\icmlkeywords{Machine Learning, ICML}
\vskip0.3in]

\printAffiliationsAndNotice{}

\begin{abstract}
We study the policy evaluation problem in multi-agent reinforcement learning. In this problem, a group of agents work cooperatively to evaluate the value function for the global discounted accumulative reward problem, which is composed of local rewards observed by the agents. Over a series of time steps, the agents act, get rewarded, update their local estimate of the value function, then communicate with their neighbors.  The local update at each agent can be interpreted as a distributed consensus-based variant of the popular temporal difference learning algorithm TD(0). While distributed reinforcement learning algorithms have been presented in the literature, almost nothing is known about their convergence rate.  Our main contribution is providing a finite-time analysis for the convergence of the distributed TD(0) algorithm. We do this when the communication network between the agents is time-varying in general. We obtain an explicit upper bound on the rate of convergence of this algorithm as a function of the network topology and the discount factor. Our results mirror what we would expect from using distributed stochastic gradient descent for solving convex optimization problems.  
\end{abstract}


\section{Introduction}\label{sec:intro}
Reinforcement learning {(\sf RL)} offers  a general paradigm for learning optimal policies in stochastic control problems based on simulation \cite{Sutton1998_book,Bertsekas1999_book,Szepesvari2010_book}. In this context, an agent seeks to find an optimal policy through interacting with the environment, often modeled as a Markov Decision Process {(\sf MDP)}, with the goal of optimizing its long-term future reward (or cost). During the last few years, {\sf RL} has been recognized as a crucial solution for solving many challenging practical problems, such as, autonomous driving \cite{Chen2015_DeepDrive}, robotics \cite{Gu2017_DeepRL}, helicopter flight \cite{Abbeel2007_NIPS},  board games \cite{Silver2016_MasteringTG}, and power networks \cite{Kar2013_QDLearning}.

A central problem in {\sf RL} is to estimate the accumulative reward (value function) for a given stationary policy of an {\sf MDP}, often referred to as the policy evaluation problem. This problem arises as a subproblem in the important policy iteration method in {\sf RL} \cite{Sutton1998_book,Bertsekas1999_book}. Perhaps, the most popular method for solving this  problem is temporal-difference learning ({\sf TD}$(\lambda)$), originally proposed by Sutton \cite{Sutton1988_TD} and analyzed explicitly for various scenarios in \cite{Dayan1992_TD,Gurvits1994_TD,Pineda1997_TD,Tsitsiklis1997_TD, Tsitsiklis1999_TDAverage}. This method approximates the long-term future cost as a function of current state, and depends on a scalar $\lambda\in[0,1]$ that controls a trade-off between the accuracy of the approximation and the susceptibility to simulation noise. In this paper, we focus on the special case $\lambda = 0$, i.e., the {\sf TD}$(0)$ algorithm with function approximation, which has received a great success for solving many complicated problems involving a very large state space \cite{Tesauro1995_TD0,Mnih2015_Nature,Silver2016_MasteringTG}.  This algorithm has a straightforward implementation, and can be executed incrementally as observation are made. 

Our interest in this paper is to study the policy evaluation problem in multi-agent reinforcement learning ({\sf MARL}), where a group of agents operate in an environment. We are motivated by broad applications of the multi-agent paradigm within engineering, for example, mobile sensor networks \cite{CortesMKB2004,Ogren2004_TAC}, cell networks \cite{Bennis2013_CellNetworks}, and power networks \cite{Kar2013_QDLearning}. In this context, each agent takes its own action based on the current state, and consequently a new state is determined. Moreover, the agents receive different local rewards, which are the functions of their current state, their new state, and their action. We assume that each agent only knows its own local reward. Their goal is to cooperatively evaluate the global accumulative reward based only on their local interactions. For solving this problem, our focus is to consider a distributed variant of {\sf TD}$(0)$ algorithm with linear function approximation, where our goal is to provide a finite-time analysis of such distributed {\sf TD}$(0)$ in the context of {\sf MARL}. To the best of our knowledge, such finite-time analysis for distributed {\sf TD}$(0)$ is not available in the existing literature.

\subsection{Existing Literature}
Despite its simple implementation, theoretical analysis for the performance of {\sf TD} is quite complicated. In general, {\sf TD} method is not the true stochastic gradient descent ({\sf SGD}) for solving any static optimization problems,  making it challenging to characterize the consistency and quantify the progress of this method. A dominant approach to study the asymptotic convergence of {\sf TD} learning is to use tools from stochastic approximation ({\sf SA}), specifically, the ordinary differential equation ({\sf ODE}) method. In particular, Tsitsiklis and Van Roy considered a policy evaluation problem on a discounted {\sf MDP} for both finite and infinite state spaces with linear function approximation \cite{Tsitsiklis1997_TD}.  By viewing {\sf TD} as a stochastic approximation for solving a suitable Bellman equation, they characterized the almost sure convergence of this method based on the {\sf ODE} approach. That is, under the right conditions, the {\sf SA} update asymptotically follows the trajectory of a stable {\sf ODE}. The convergence of {\sf SA} is then equivalent to the convergence of this {\sf ODE} solution, which can be shown by using Lyapunov theorem in control theory. Following this work, Borkar and Meyn provided a general and unified framework for the convergence of {\sf SA} with broad applications in {\sf RL} \cite{Borkar2000_ODE}. More general results in this area can be found in the monograph by Borkar \cite{borkar2008}. 

While the asymptotic convergence of {\sf TD} algorithms is well-known, very little is known about their finite-time analysis (or the rate of convergence). Indeed, it is not obvious how to derive such convergence rate by using {\sf ODE} approach. A concentration bound was given in \cite{Thoppe2018_ConcentrationBound,borkar2008} for the {\sf SA} algorithm under a strict stability assumption of the iterates. Recently, a finite-time analysis of {\sf TD}$(0)$ algorithm with linear function approximation was simultaneously studied in \cite{Dalal2018_FiniteTD0,Bhandari2018_FiniteTD0} for a single agent problem. These works carefully characterize the progress of {\sf TD}$(0)$ update and derive its convergence rate by utilizing the standard techniques of {\sf SGD} and the results in \cite{Tsitsiklis1997_TD}. 

Within the context of {\sf MARL}, an asymptotic convergence of the distributed gossiping {\sf TD}$(0)$ with linear function approximation was probably first studied in \cite{Mathkar2017_GossipRL}, where the authors utilize the standard techniques of {\sf ODE} approach. Such results were also studied implicitly in the context of distributed actor-critic methods in \cite{zhang2018_ICML}. On the other hand, unlike recent works about finite-time analysis in a single agent setup \cite{Dalal2018_FiniteTD0,Bhandari2018_FiniteTD0}, the rate of convergence of distributed {\sf TD}$(0)$ is missing in the existing literature of {\sf MARL}, which is the focus of this paper.     

Finally, we mention some related {\sf RL} methods for solving policy evaluation problems in both single agent {\sf RL} and {\sf MARL}, such as, the gradient temporal difference methods studied in \cite{Sutton2008_GTD,Sutton2009_FGTD,Liu2015_FiniteGTD,Macua_2015_TAC,Stankovic2016_ACC,Wai2018_NIPS}, least squares temporal difference ({\sf LSTD}) \cite{Bradtke1996_LSTD,Tu2018_ICML}, and least squares policy evaluation ({\sf LSPE}) \cite{Nedic2003_LSPE,Yu2009_TAC}. Although they share some similarity with {\sf TD} learning, these methods belong to a different class of algorithms, which involve more iteration complexity in their updates. 

\subsection{Main Contributions}
In this paper, we study a distributed variant of the {\sf TD}(0) algorithm for solving a policy evaluation problem in {\sf MARL}. Our distributed algorithm is composed of the popular consensus step and local {\sf TD}$(0)$ updates at the agents. Our main contribution is to provide a finite-time analysis for the convergence of distributed {\sf TD}$(0)$ over time-varying networks. We obtain an explicit upper bound on the rate of convergence of this algorithm as a function of the network topology and the discount factor. Our results mirror what we would expect from using distributed {\sf SGD} for solving convex optimization problems. For example, when the stepsizes are chosen independently with the problem's parameters, the function value estimated at each agent's time-weighted estimates converges to a neighborhood around the optimal value at a rate $\mathcal{O}(1\,/\,k)$ under constant stepsizes and asymptotically converges to the optimal value at a rate $\mathcal{O}(1\, /\,\sqrt{k+1})$ under time-varying stepsizes. Moreover, our rates also show the dependence on the network topology and the discount factor associated with the accumulative reward. These convergence rates mirrors the ones from using distributed stochastic gradient descent for solving convex optimization problems. On the other hand, we observe the same results in the case of strongly convex optimization problems for both constant and time-varying stepsizes, when the stepsizes are chosen based on some knowledge of the problem's parameter. We note that such an explicit formula for the rate of distributed {\sf TD}$(0)$ algorithm is not available in the literature.


\section{Centralized Temporal-Difference Learning}\label{sec:centralized}
We briefly review here the problem of policy evaluation for a given stationary policy $\mu$ over a Markov Decision Process ({\sf MDP}). This will facilitate our development of multi-agent reinforcement learning in the next section. We consider a discounted reward {\sf MDP} defined by $5$-tuple $(\Scal,\Ucal,\Pcal,\Rcal,\gamma)$, where $\Scal$ is a finite set of states,   $\Scal = \{1,\ldots,n\}$. In addition, $\Ucal$ is the set of control actions, $\Pcal$ is the set of transition probability matrices  associated with the Markov chain, $\Rcal$ is the reward function, and $\gamma\in(0,1)$ is the discount factor. 

At each time $k\geq0$, the agent observes the current state $s(k) = i$ and applies an action $\mu(s(k))$, where $\mu:\Scal\rightarrow\Ucal$. The system then moves to the next state 
$s'(k) = j$ with some probability $p_{ij}(\mu(i))$ decided by the action $\mu(i)$. Moreover, the agent receives the instantaneous reward $r(k)$. That is, for each transition from $i$ to $j$ an immediate reward $r$ is observed according to $\Rcal(i,j)$. In the sequel, since the policy is stationary, we drop $\mu$ in our notation for convenience. The discounted accumulative reward $J^*:\Scal\rightarrow\Rset$ associated with this Markov chain is defined for all $i\in\Scal$ as\vspace{-0.2cm}
\begin{align}
J^*(i) \triangleq \Eset \left[\sum_{k=0}^{\infty}\gamma^{k}\Rcal(s(k),s'(k))\Bigm| s(0) = i\right],\label{centralized:ValueFunc}
\end{align}
which is the solution of the following Bellman equation \cite{Sutton1998_book,Bertsekas1999_book}\vspace{-0.2cm}
\begin{align}
J^*(i) = \sum_{j = 1}^n p_{ij}\left[\Rcal(i,j) + \gamma J^*(j) \right],\quad \forall i\in\Scal.\label{centralized:Bellman}
\end{align}
We are interested in the case when the number of states is very large, and so computing $J^*$ exactly may be intractable.  To mitigate this, we use low-dimensional approximation $\tilde{J}$ of $J^*$, restricting $\tilde{J}$ to be in a linear subspace.  While more advanced nonlinear approximations using, for example, neural nets as in the recent works \cite{Mnih2015_Nature, Silver2016_MasteringTG} may lead to more powerful approximations, the simplicity of the linear model allows us to analyze it in detail.  The linear function approximation $\tilde{J}$ is parameterized by a weight vector $\theta\in\Rset^{K}$, with\vspace{-0.2cm}
\begin{align}
\tilde{J}(i,\theta) = \sum_{\ell=1}^{K}\theta_\ell\phi_{\ell}(i),\label{centralized:LinearAprox}
\end{align}  
for a given set of $K$ feature vectors $\phi_{\ell}:\Scal\rightarrow\Rset$, $\ell\in\{1,\ldots,K\}$, where $K\ll n$. Let $\phi(i)$ be a vector defined as
\begin{align*}
\phi(i) = (\phi_1(i),\ldots,\phi_K(i))^T\in\Rset^{K}.
\end{align*}
And let $\Phi\in\Rset^{n\times K}$ be a matrix, whose $i-$th row is the row vector $\phi(i)^T$ and whose $\ell-$th column is the vector $\phi_\ell = (\phi_\ell(1),\ldots,\phi_\ell(n))^T\in\Rset^{n}$, that is 
\begin{align*}
\Phi = \left[\begin{array}{ccc}
\vert     &  & \vert\\
\phi_1     & \cdots & \phi_K\\
\vert & & \vert
\end{array}\right] = \left[\begin{array}{ccc}
\mbox{---}   & \phi(1)^T & \mbox{---}\\
\cdots     & \cdots & \cdots\\
\mbox{---}   & \phi(n)^T & \mbox{---}
\end{array}\right].
\end{align*}
Thus, $\tilde{J}(\theta) =  \Phi \theta$, giving the gradient of $\tilde{J}$ w.r.t $\theta$ as
\begin{align*}
\nabla\tilde{J} = \Phi^T\quad\text{and}\quad \nabla\tilde{J}(i,\theta) = \phi(i),\quad \forall i\in\Scal. 
\end{align*}
The goal now is to find a $\tilde{J}$ that is the best approximation of $J^*$  based on the generated data by applying the stationary policy $\mu$ on the {\sf MDP}. That is, we seek an optimal weight $\theta^*$ such that the distance between $\tilde{J}$ and $J^*$ is minimized. For solving this problem, we are interested in the {\sf TD}$(0)$ algorithm, which is equivalent to a stochastic approximation for solving the Bellman equation \eqref{centralized:Bellman} \cite{Bertsekas1999_book}. In particular, we assume that at each time $k$ there is an oracle giving one data tuple $(s(k),s'(k),r(k))$, probably through simulation. The method of TD$(0)$ then updates $\theta$ as\vspace{-0.1cm} 
\begin{align}
\theta(k+1) = \theta(k) + \alpha(k) d(k)\nabla\tilde{J}(s(k),\theta(k)),\label{centralized:TD0} 
\end{align}
where $d(k)\in\Rset$ is the temporal difference at time $k$\vspace{-0.1cm}
\begin{align*}
d(k) &= r(k) + \gamma\tilde{J}(s'(k),\theta(k))- \tilde{J}(s(k),\theta(k)).    
\end{align*}
Here, $d(k)$ represents the difference between the outcome $r(k) + \gamma\tilde{J}(s'(k),\theta(k))$ of the current stage and the current estimate $\tilde{J}(s(k),\theta(k))$. Thus, $d(k)$ provides us an indicator whether to increase or decrease our current variable $\theta(k)$. 

In \cite{Tsitsiklis1997_TD}, to study the convergence of {\sf TD}$(0)$, the authors viewed $J^*$ as a fixed point of the Bellman operator $\Tcal:\Rset^n\rightarrow\Rset^n$ defined as\vspace{-0.1cm}
\begin{align*}
(\Tcal J)(i) =  \sum_{j=1}^n p_{ij} \left\{\Rcal(i,j)+\gamma J(j) \right\},\quad \forall i\in\Scal.
\end{align*}
and showed that $\{\theta(k)\}$ generated by {\sf TD}$(0)$ converges to $\theta^*$ almost surely, where $\theta^*$ is the unique solution of the projected Bellman equation $\Pi\, \Tcal(\Phi\theta^*) = \Phi\theta^*$, and satisfies\vspace{-0.1cm} 
\begin{align}
\|\Phi \theta^* - J^*\|_{D} \leq \frac{1}{1-\gamma}\|\Pi\, J^* - J^*\|_D.\label{MARL:opt_bound}   
\end{align}
Here, $\Pi\,J$ denotes the projection of a vector $J$ to the linear subspace spanned by the feature vectors $\phi_\ell$\vspace{-0.1cm}
\begin{align*}
\Pi\,J = \underset{y\in\text{span}\{\phi_\ell\}}{\arg\min}\|y - J\|_D,    
\end{align*}
where $\|J\|_{D}^2 = J^TDJ$ is the weighted norm of $J$ associated with the $n\times n$ diagonal matrix $D$, whose diagonal entry are $(\pi(1),\ldots,\pi(n))$, the stationary distribution associated with $\Pcal$. Moreover, we denote by $\|\cdot\|$ the Euclidean and Frobenius norms for a vector and a matrix, respectively.  

As shown in \cite{Tsitsiklis1997_TD} $\theta^*$ satisfies $\Abf \theta^* =  b$, where $\Abf$ is positive definite, i.e., $x^T\Abf x > 0 $ $\forall x$, and
\begin{align}
\hspace{-0.3cm}\Abf = \Eset_{\pi}\left[\phi(s)(\phi(s)-\gamma\phi(s'))^T\right],\quad b = \Eset_{\pi}[r\phi(s)].\label{notation:A_b} 
\end{align}
It is worth noting that although {\sf TD}(0) can be viewed as a stochastic approximation method for solving \eqref{centralized:Bellman}, it is not a {\sf SGD} method since the temporal direction $d(k)\nabla\tilde{J}(s(k),\theta(k))$ is not a true stochastic gradient of any static objective function. This makes analyzing the finite-time convergence of {\sf TD}$(0)$ more challenging since standard techniques of {\sf SGD} are not applicable. Our focus, therefore, is to provide such a finite-time analysis for the distributed variant of {\sf TD}(0) algorithm in the context of {\sf MARL}.



\section{Multi-Agent Reinforcement Learning}
We consider a multi-agent reinforcement learning system of $N$ agents modeled by a Markov decision process. We assume that the agents can communicate with each other through a given sequence of time-varying undirected graphs $\Gcal(k) = (\Vcal,\Ecal(k))$, where $\Vcal = \{1,\ldots,N\}$ and $\Ecal(k) = \Vcal\times\Vcal$ are the vertex and edge sets at time $k$, respectively.  This framework can be mathematically characterized by a $6$-tuple $(\Scal,\{\Ucal_v\},\Pcal, \{\Rcal_v\},\gamma,\Gcal(k))$ for $v\in\Vcal$ at each time step $k$. Here, $\Scal=\{1,\ldots,n\}$ is the global finite state space observed by the agents, $\Ucal_v$ is the set of control available to each agent $v$, and $\Rcal_v$ is each agent's reward function. 

At any time $k$, each agent $v$ observes the current states $s(k)$ and applies an action $\mu_v(k)\in\Ucal_v$, where $\mu_v$ is a stationary policy of agent $v$. Based on the joint actions of the agents, the system moves to the new state $s'(k)$ and agent $v$ receives an instantaneous local reward $r_v(k)$, defined by $\Rcal_v(s(k),s'(k))$ for each  transition of states. The goal of the agents is to cooperatively find the total accumulative reward $J^{*}$ over the network defined as\vspace{-0.1cm} 
\begin{align}
\hspace{-0.2cm}J^{*}(i) \triangleq \Eset \left[\sum_{k=0}^{\infty}\frac{\gamma^{k}}{N}\sum_{v\in\Vcal}\Rcal_v(s(k),s'(k))\Bigm| s(0) = i\right],  \label{distributed:ValueFunc}
\end{align}
which also satisfies the following Bellman equation \vspace{-0.1cm}
\begin{align*}
J^{*}(i) = \sum_{j=1}^n p_{ij} \left\{\frac{1}{N}\sum_{v\in\Vcal}\Rcal_v(i,j)+\gamma J^{*}(j) \right\},\qquad i\in\Scal.
\end{align*} 
Similar to the centralized problem, we are interested in finding a linear approximation $\tilde{J}$ of $J^{*}$ as given in Eq.\ \eqref{centralized:LinearAprox}. In addition, since each agent knows only its own reward function, the agents have to cooperate to find $\tilde{J}$. In the following, for solving such problem we provide a distributed variant of the {\sf TD}(0) algorithm presented in Section \ref{sec:centralized}, where the agents only share their estimates of the optimal $\theta^*$ to its neighbors but not their local rewards. Similar to the centralized problem (a.k.a Eq.\ \eqref{notation:A_b}),  $\theta^*$ satisfies  \vspace{-0.1cm}
\begin{align}
\Abf\theta^* = \frac{1}{N} \sum_{v\in\Vcal} b_v,\label{sec_MARL_rates:opt_cond}\vspace{-0.4cm}
\end{align}
where the positive defnite matrix $\Abf$ and $b_v$ are defined as
\begin{align}
\hspace{-0.3cm}\Abf = \Eset_{\pi}\left[\phi(s)(\phi(s)-\gamma\phi(s'))^T\right],\quad b_v = \Eset_{\pi}[r_v\phi(s)].\label{notation:A_bv} 
\end{align}

\subsection{Distributed Consensus-Based TD$(0)$ Learning}\label{subsec:DistributedTD0}
In this section, we study a distributed consensus-based variant of the centralized {\sf TD}(0) method, formally stated in Algorithm \ref{alg:DTD0}. In particular, agent $v$ maintains their own estimate $\theta_v\in\Rset^{K}$ of the optimal $\theta^*$. At any iteration $k\geq0$, each agent $v$ only receives the estimates $\theta_u$ from its neighbors $u\in\Ncal_v(k)$, where $\Ncal_v(k):= \{ u \in \Vcal\; |\; (v,u) \in\Ecal(k)\}$ is the set of node $v$'s neighbors at time $k$. Agent $v$ then observes one data tuple $(s(k),s'(k),r_v(k))$ returned by the oracle, following an  update to its estimate $\theta_v(k+1)$ by using  Eq.\ \eqref{alg:theta_v}. Here, $W_{vu}(k)$ is the weight which agent $v$ assigns for $\theta_u(k)$. Finally, $[\cdot]_{\Xcal}$ denotes the projection to a set $\Xcal\subset\Rset^{K}$, whose condition is given shortly.


The update of Eq.\ \eqref{alg:theta_v} has a simple interpretation: agent $v$ first computes $y_v$ by forming a weighted average of its own value $\theta_v$ and the values $\theta_u$ received from its neighbor $u\in\Ncal_v$, with the goal of seeking consensus on their estimates. Agent $v$ then moves along its own temporal direction $d_v(k)\nabla\tilde{J}(s(k),\theta_v(k))$ to update its estimate, pushing the consensus point toward $\theta^*$. In Eq.\ \eqref{alg:theta_v} each agent $v$ only shares $\theta_v$ with its neighbors but not its immediate reward $r_v$. In a sense, the agents implement in parallel $N$ local {\sf TD}(0) methods and then combine their estimate through consensus steps to find the global approximate reward $\tilde{J}$. 

\begin{algorithm}[t]
\caption{Distributed {\sf TD}(0) Algorithm}
\begin{enumerate}[leftmargin = 4mm]
\item \textbf{Initialize}: Each agent $v$ arbitrarily initializes $\theta_v(0)\in\Xcal$ and the sequence of stepsizes $\{\alpha(k)\}_{k\in\Nset}$.\\
Set $\hat{\theta}_v(0) = \theta_v(0)$ and $S_v(0) = 0$.\vspace{-0.2cm}
\item \textbf{Iteration}: For $k=0,1,\ldots,$ agent $v\in\Vcal$ implements\vspace{-0.2cm}
\begin{itemize}[leftmargin = 4mm]
    \item[a.] Exchange $\theta_v(k)$ with agent $u\in\Ncal_{v}(k)$
    \item[b.] Observe a tuple $(s(k),s'(k),r_v(k))$
    \item[c.] Execute local updates \vspace{-0.2cm}
    {\small
        \begin{align}
        \begin{aligned}
        &y_v(k) = \sum_{u\in\Ncal_v(k)}W_{vu}(k)\theta_u(k)\\ 
        &d_v(k) = r_v(k) + \theta_v(k)^T\Big(\gamma\phi(s'(k)) - \phi(s(k))\Big)\\
        &\theta_v(k+1) =\Big[y_v(k) + \alpha(k) d_v(k)\phi(s(k))\Big]_{\Xcal}
        \end{aligned}
        \label{alg:theta_v}
        \end{align}\vspace{-0.2cm}
    }\vspace{-0.2cm}
    \item[d.] Update the output\vspace{-0.2cm}
    \begin{align*}
     S_v(k+1) &= S_v(k) + \alpha(k)\notag\\
    \hat{\theta}_v(k+1) &= \frac{S(k)\hat{\theta}_v(k)+\alpha(k)\theta_v(k)}{S(k+1)}
    \end{align*}
\end{itemize}\vspace{-0.3cm}
\end{enumerate}
\label{alg:DTD0}
\end{algorithm}

\subsection{Convergence Rates of Distributed {\sf TD}$(0)$}
We state here the main results of this paper, the convergence rates of the distributed {\sf TD}$(0)$ algorithm.  In particular, we provide an explicit formula for the upper bound on the rates of {\sf TD}$(0)$ for both constant and diminishing stepsizes. Our bounds mirror the results that we would expect from the ones using distributed {\sf SGD} for solving convex optimization problems. For ease of exposition, we delay the analysis of these results to Sections \ref{sec_analysis:proof_thm1} and \ref{sec_analysis:proof_thm2}. 

Our main results are established based on the assumption that the data tuple $\{(s(k),s'(k),r_v(k)\}$ are sampled i.i.d from stationary distribution for all $k$ and $v$. However, within a tuple, $s'(k)$ and $r_v(k)$ are dependent on $s(k)$. We note that the i.i.d condition is often assumed in the literature when dealing with the rates of {\sf RL} algorithms, see for example; \cite{ Dalal2018_FiniteTD0,Bhandari2018_FiniteTD0}. Such a condition is not easy to remove since the dependence between samples can make the analysis become extremely complicated in general. One possible way to collect i.i.d samples is to generate independently a number of trajectories and using first-visit methods, see \cite{Bertsekas1999_book}. On the other hand, sampling from stationary distribution can be done by taking last samples of a long trajectory.     

Moreover, we make the following fairly standard assumptions in the existing literature of consensus and reinforcement learning  \cite{Tsitsiklis1997_TD,Dalal2018_FiniteTD0,Bhandari2018_FiniteTD0,NedicOR2018}. To the rest of this paper, we will assume that these assumptions always hold. 
\begin{assump}\label{assump:BConnectivity}
There exists an integer $\Bcal$ such that the following graph is connected for all positive integers $\ell$\vspace{-0.25cm}
\begin{align*}
(\Vcal\,,\,\Ecal(\ell\Bcal)\cup\Ecal(\ell\Bcal+1)\ldots\cup\Ecal((\ell+1)\Bcal-1)).
\end{align*}
\end{assump} 
\begin{assump}\label{assump:doub_stoch} There exists a positive constant $\beta$ such that $\Wbf(k)=[W_{vu}(k)]\in\Rset^{N\times N}$ is doubly stochastic and $W_{vv}(k)\geq \beta$ $\forall v\in\Vcal$. Moreover, $W_{vu}(k)\in[\beta,1)$ if $(v,u)\in\Ncal_v(k)$ otherwise $W_{vu}(k) = 0$ for all $v,u\in\Vcal$.
\end{assump}
\begin{assump}\label{assump:Markov}
The Markov chain associated with $\Pcal$ is irreducible. 
\end{assump}
\begin{assump}\label{assump:reward}
All the local rewards are uniformly bounded, i.e., there exist constants $C_v$, for all $v\in\Vcal$ such that $|\,\Rcal_v(s,s')\,|\leq C_v$, for all $s,s'\in\Scal$. 
\end{assump}
\begin{assump}\label{assump:features}
The feature vectors $\{\phi_{\ell}\}$, for all $\ell\in\{1,\ldots,K\}$, are linearly independent, i.e., the matrix $\Phi$ has full column rank. In addition, we assume that all feature vectors $\phi(s)$ are uniformly bounded, i.e., $\|\phi(s)\|\leq 1$.  
\end{assump}
\begin{assump}\label{assump:Xset}
The convex compact set $\Xcal\subset\Rset^{K}$ contains the fixed point $\theta^*$ of the projected Bellman equation.
\end{assump}\vspace{-0.2cm}
Assumption \ref{assump:BConnectivity}  ensures the long-term connectivity and information propagation between the agents, while Assumption \ref{assump:doub_stoch} imposes the underlying topology of $\Gcal(k)$ where each agent only communicates with its neighbors. Assumptions \ref{assump:BConnectivity} and \ref{assump:doub_stoch} yield the following condition \cite{NedicOR2018}\vspace{-0.1cm}
\begin{align}
\left\|\Wbf(k)\ldots\Wbf(k+\Bcal-1)\Qbf\Theta\right\|\leq \eta\|\Qbf\Theta\|.\;\forall \Theta,\label{const:Bmix}
\end{align}
In Eq.\ \eqref{const:Bmix}, $\Theta\in\Rset^{N\times K}$ and $\Qbf\in\Rset^{N\times N}$ are defined as
\begin{align}
\Theta \triangleq \left[\begin{array}{ccc}
\mbox{---}   & \theta_1^T & \mbox{---}\\
\cdots     & \cdots & \cdots\\
\mbox{---}   & \theta_N^T & \mbox{---}
\end{array}\right],\quad \Qbf = \Ibf - \frac{1}{N} \1\1^T,\label{notation:Theta_Q}
\end{align}
where $\Ibf$ and $\1$ are the identity matrix and the vector in $\Rset^{N}$ with all entries equal to $1$, respectively. Moreover, denote by $\sigma_2(\Wbf(k))$ the second largest singular value of $\Wbf(k)$ and $\eta\in(0,1)$ a parameter representing the spectral properties of the sequence of graphs $\{\Gcal(k)\}$  defined as\vspace{-0.2cm}
\begin{align}
\eta = \min\;\left\{1-1/(2N^3),\, \sup_{k\geq 0}\sigma_2(\Wbf(k))\right\}.\label{const:delta}
\end{align}
For convinience, we define $\delta := \eta^{\frac{1}{\Bcal}}$.
Assumption \ref{assump:Markov} guarantees that there exists a unique stationary distribution $\pi$ with positive entries, while under Assumption \ref{assump:reward} the accumulative reward $J^*$ is well defined. Under Assumption \ref{assump:features}, the projection operator $\Pi$ is well defined. If there are some dependent $\phi_{\ell}$, we can simply disregard those dependent feature vectors. Moreover, the uniform boundedness of $\phi_\ell$ can be guaranteed through feature normalization. 

Finally, Assumption \ref{assump:Xset} is used to guarantee the stability of agents' updates, which is often assumed in the literature of {\sf MARL} and stochastic approximation, see for example; \cite{zhang2018_ICML,borkar2008}. We note that this projection step is only used for the purpose of our convergence analysis. In practice, we may not need this step to implement Algorithm \ref{alg:DTD0} since the consensus step likely keeps the agents' estimates close to each other while the {\sf TD} direction drives these estimates to an optimal solution.\vspace{-0.2cm}          

Denote by $\sigma_{\min}$ and $\sigma_{\max}$ the smallest and largest singular value of $\Abf$, respectively. Let ${
R_0 = \max_{\theta\in\Xcal}\|\theta-\theta^*\|.}$ We now present our first results,   the convergence rates of the approximate value function estimated at each agent's output to the optimal value. That is, we provide the speed of convergence of $\tilde{J}(\hat{\theta}_v(k))$ to $\Phi\theta^*$, for each $v\in\Vcal$. These results are established based on proper conditions on stepsizes $\alpha(k)$ chosen independently of the problem's parameters.   
\begin{thm}\label{thm:opt_value}Let $\theta_v(k)$, for all $v\in\Vcal$, be generated by Algorithm \ref{alg:DTD0}. In addition, given the constant $L>0$ in Lemma \ref{lem:consensus}, let $\beta_0$ and $\beta_1$ be two positive constants defined as
\begin{align}
&\beta_0 = \Eset\left[\|\btheta(0)-\theta^*\|^2\right]\notag\\
&\qquad\quad +\frac{\alpha(0)\Eset\left[\|\Theta(0)\|\right]\left(L+2N\sigma_{\max}R_0\right)}{N\eta(1-\delta)}\notag\\ 
&\beta_1 = \frac{4L(L+N\sigma_{\max}R_0)}{N\eta(1-\delta)}\cdot\label{thm_opt_value:beta}
\end{align}
1. If $\alpha(k) = \alpha$ for some positive constant $\alpha$ then $\forall v\in\Vcal$
\begin{align}
\hspace{-0.3cm}\|\tilde{J}(\hat{\theta}_v(k))-\tilde{J}(\theta^*)\|_D^2\leq \frac{\beta_0}{\alpha(1-\gamma)}\frac{1}{k+1} + \frac{\beta_1\alpha}{(1-\gamma)}\cdot \label{thm_opt_value:Ineq1}
\end{align}
2. If $\{\alpha(k)\} = 1\,/\,\sqrt{k+1}$ for all $k\geq0$ then $\forall v\in\Vcal$
\begin{align}
&\|\tilde{J}(\hat{\theta}_v(k))-\tilde{J}(\theta^*)\|_D^2\leq \frac{\beta_0+\beta_1(1+\ln(k+1))}{2(1-\gamma)\sqrt{k+1}}\cdot\label{thm_opt_value:Ineq2}
\end{align}
\end{thm}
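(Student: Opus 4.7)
The plan is to reduce the analysis to tracking the network-average iterate $\bar\theta(k) := \frac{1}{N}\sum_{v\in\Vcal}\theta_v(k)$. Because $\Wbf(k)$ is doubly stochastic (Assumption \ref{assump:doub_stoch}), the consensus step preserves this average, so $\bar\theta(k)$ evolves by a centralized TD(0)-like recursion
\[
\bar\theta(k+1) \;\approx\; \bar\theta(k) + \frac{\alpha(k)}{N}\sum_{v\in\Vcal} d_v(k)\phi(s(k)),
\]
up to a non-expansive projection onto $\Xcal$ (which contains $\theta^*$ by Assumption \ref{assump:Xset}). The analysis then parallels the single-agent argument in \cite{Bhandari2018_FiniteTD0}, with the agent-level disagreement $\theta_v(k) - \bar\theta(k)$ entering as an additional perturbation to be controlled by the consensus machinery.

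The first technical step is a one-step inequality for $\Eset\|\bar\theta(k+1)-\theta^*\|^2$. Expanding the square, taking expectations under i.i.d.\ sampling, and using the fundamental TD(0) contraction from \cite{Tsitsiklis1997_TD},
\[
(\theta-\theta^*)^T\bigl(\Abf\theta - \tfrac{1}{N}\textstyle\sum_v b_v\bigr) \;=\; (\theta-\theta^*)^T\Abf(\theta-\theta^*) \;\geq\; (1-\gamma)\,\|\tilde J(\theta) - \tilde J(\theta^*)\|_D^2,
\]
I expect to arrive at a bound of the form
\[
\Eset\|\bar\theta(k{+}1){-}\theta^*\|^2 \;\leq\; \Eset\|\bar\theta(k){-}\theta^*\|^2 - 2\alpha(k)(1-\gamma)\,\Eset\|\tilde J(\bar\theta(k)){-}\tilde J(\theta^*)\|_D^2 + C_1\alpha(k)^2 + C_2\,\tfrac{\alpha(k)}{N}\textstyle\sum_v \Eset\|\theta_v(k){-}\bar\theta(k)\|,
\]
where Assumptions \ref{assump:reward}--\ref{assump:features} supply uniform bounds on $r_v$ and $\phi(s)$, and Assumption \ref{assump:Xset} caps $\|\theta-\theta^*\|\leq R_0$, giving the $O(\alpha(k)^2)$ noise term. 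The consensus Lemma \ref{lem:consensus}, whose constant $L$ and geometric rate $\delta = \eta^{1/\Bcal}$ come directly from the $\Bcal$-step mixing in \eqref{const:Bmix}, will then bound $\tfrac{1}{N}\sum_v \Eset\|\theta_v(k)-\bar\theta(k)\|$ by terms of order $L\alpha(k)/(\eta(1-\delta))$, which is exactly the factor appearing in $\beta_0$ and $\beta_1$.

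Summing the descent inequality over $t=0,\ldots,k$, rearranging, and dividing by $\sum_{t=0}^k \alpha(t)$ isolates a weighted average of $\Eset\|\tilde J(\bar\theta(t))-\tilde J(\theta^*)\|_D^2$. Jensen's inequality applied to the convex map $\theta\mapsto\|\tilde J(\theta)-\tilde J(\theta^*)\|_D^2$ moves the time-average inside, yielding a bound on $\Eset\|\tilde J(\bar{\hat\theta}(k))-\tilde J(\theta^*)\|_D^2$, where $\bar{\hat\theta}(k) = \tfrac{1}{N}\sum_v \hat\theta_v(k)$. Passage from the network-average output to a single agent's output is handled by $\|\tilde J(\hat\theta_v)-\tilde J(\theta^*)\|_D^2 \leq 2\|\tilde J(\bar{\hat\theta})-\tilde J(\theta^*)\|_D^2 + 2\|\hat\theta_v-\bar{\hat\theta}\|^2$ (using $\|\Phi^T D\Phi\|\leq 1$ from Assumption \ref{assump:features}), with the second term controlled by another application of Lemma \ref{lem:consensus} to the weighted running averages; it is of the same order as the main rate. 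Specializing $\alpha(k)=\alpha$ gives $\sum_{t=0}^k\alpha(t) = \alpha(k+1)$ and $\sum_{t=0}^k\alpha(t)^2 = \alpha^2(k+1)$, producing \eqref{thm_opt_value:Ineq1}, while $\alpha(k) = 1/\sqrt{k+1}$ yields $\sum_{t=0}^k\alpha(t) \geq \sqrt{k+1}$ and $\sum_{t=0}^k\alpha(t)^2 \leq 1+\ln(k+1)$, producing \eqref{thm_opt_value:Ineq2}.

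The most delicate point will be the bookkeeping required to collapse all constants into the specific $\beta_0, \beta_1$ of \eqref{thm_opt_value:beta}. In particular, the cross term $\alpha(k)\cdot(\text{consensus error})$ must be either telescoped directly or split via $2ab\leq a^2+b^2$ so that it absorbs into the $O(\alpha(k)^2)$ budget without erasing the negative drift $-2\alpha(k)(1-\gamma)\|\tilde J(\bar\theta(k))-\tilde J(\theta^*)\|_D^2$; arranging this so that the prefactors line up with the $L, \sigma_{\max}, R_0, \eta, \delta$ combinations in $\beta_0$ and $\beta_1$ is where the care is concentrated. A secondary subtlety is that averaging the projected per-agent updates does not exactly equal the projection of the average, but this mismatch is controllable using non-expansiveness of the projection and is absorbed into the same $O(\alpha(k)^2)$ noise term.
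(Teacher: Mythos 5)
Your overall strategy coincides with the paper's: a one-step recursion for $\Eset[\|\btheta(k)-\theta^*\|^2]$ (Lemma~\ref{lem:opt}), the contraction $(\theta-\theta^*)^T\Abf(\theta-\theta^*)\geq(1-\gamma)\|\tilde J(\theta)-\tilde J(\theta^*)\|_D^2$ proved via Cauchy--Schwarz under the stationary distribution, the consensus bound of Lemma~\ref{lem:consensus} to absorb $\sum_k\alpha(k)\Eset[\|\Qbf\Theta(k)\|]$, then summation, division by $\sum_k\alpha(k)$, and Jensen. The one structural difference is where the single agent enters. The paper fixes $u$ at the start and writes $\bar h(k)=\Abf(\theta^*-\theta_u(k))+\Abf(\theta_u(k)-\btheta(k))$, so the negative drift is $-(1-\gamma)\|\tilde J(\theta_u(k))-\tilde J(\theta^*)\|_D^2$ already at agent $u$'s iterate, at the price of a cross term $2\sigma_{\max}R_0\,\Eset[\|\Qbf\Theta(k)\|]$; Jensen applied to the weighted averages $\hat\theta_u(k)$ then gives \eqref{thm_opt_value:Ineq1}--\eqref{thm_opt_value:Ineq2} with exactly the stated $\beta_0,\beta_1$. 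You instead keep the drift at $\btheta(k)$ and convert to agent $v$ only at the end via $\|\tilde J(\hat\theta_v)-\tilde J(\theta^*)\|_D^2\leq 2\|\tilde J(\bar{\hat\theta})-\tilde J(\theta^*)\|_D^2+2\|\hat\theta_v-\bar{\hat\theta}\|^2$. This is legitimate and gives the same rates (the second term is of the claimed order by Lemma~\ref{lem:consensus}), but the factor of $2$ and the extra additive consensus term mean you will not land on the specific constants $\beta_0,\beta_1$ in \eqref{thm_opt_value:beta}; you would prove the theorem only up to enlarged constants. If you want the constants as stated, move the agent-$u$ decomposition inside the drift as the paper does.

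One step in your write-up is too glib to stand as written: the claim that the mismatch between ``average of projections'' and ``projection of the average'' is absorbed into the $O(\alpha(k)^2)$ budget \emph{by non-expansiveness alone}. The average projection error $\ebar(k)$ satisfies only $\|\ebar(k)\|\leq L\alpha(k)/N$, so the cross term $-2(\btheta(k)-\theta^*)^T\ebar(k)$ is a priori $O(R_0 L\alpha(k))$, i.e.\ first order in $\alpha(k)$, which would destroy the rate. The paper resolves this with the obtuse-angle property of projections, $(\Pcal_{\Xcal}[x]-x)^T(x-y)\leq-\|\Pcal_{\Xcal}[x]-x\|^2$ for $y\in\Xcal$, applied per agent at $\tilde\theta_v(k)$ (Lemma~\ref{apx_lem:projection}), which converts the cross term into $\frac{L}{N}\alpha(k)\|\Qbf\Theta(k)\|+\frac{2L^2}{N}\alpha^2(k)$. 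An alternative consistent with your outline is to use convexity of $\|\cdot-\theta^*\|^2$ together with non-expansiveness and $\theta^*\in\Xcal$ to get $\|\btheta(k+1)-\theta^*\|^2\leq\frac{1}{N}\sum_v\|\tilde\theta_v(k)-\theta^*\|^2$, at the cost of an extra consensus-variance term; either way, some argument beyond non-expansiveness is required, and your proposal should name it.
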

As shown in Eq. \eqref{thm_opt_value:Ineq1}, our rate mirrors what we would expect in using distributed {\sf SGD} for solving a convex optimization problem with a constant stepsize, i.e., the convergence of the function value to a neighborhood around the optimal value occurs at $\mathcal{O}(1\,/\,k+1)$. In addition, the rate of the distributed {\sf TD}$(0)$ also depends inversely on $1-\gamma$ and $1-\delta$. Here, the term $1\,/\,(1-\gamma)$ is expected, as can been seen from Eq. \eqref{MARL:opt_bound}. Moreover, $1-\delta$ is the spectral gap of $\Wbf$ and its inverse represents the connectivity of the underlying communication graph between agents. For different graphs, we have different values of $\delta$, see for example \cite{NedicOR2018}. Similar observation holds for the case of time-varying stepsizes $\alpha(k) = 1\,/\,\sqrt{k+1}$, where we would expect an asymptotic rate at $\mathcal{O}(1\,/\,\sqrt{k+1})$, with the same dependence on the inverse of $1-\gamma$ and $1-\delta$.      

Second, we derive the convergence rate of $\hat{\theta}_v(k)$, for all $v\in\Vcal$, to the optimal solution $\theta^*$, where $\sigma_{\min}$ of $\Abf$ is assumed to be known. In particular, the stepsizes $\alpha(k)$ are chosen based on this $\sigma_{\min}$. We again observe the same rates as we would expect from using distributed {\sf SGD} for solving strongly convex optimization problems. In addition, these rates depend on the condition number $\sigma_{\max}/\sigma_{\min}$ of $\Abf$, as often observed in distributed {\sf SGD}. 
\begin{thm}\label{thm:opt_theta}
Let $\theta_v(k)$, for all $v\in\Vcal$, be generated by Algorithm \ref{alg:DTD0}. In addition, given the constant $L>0$ in Lemma \ref{lem:consensus}, let $\beta_2$ and $\beta_3$ be two positive constants defined as\vspace{-0.2cm}
\begin{align}
\begin{aligned}
&\beta_2 = \frac{4(L+N\sigma_{\max}R_0)\Eset\left[\|\Theta(0)\|\right]}{N\eta}\\ 
&\beta_3 = \frac{16L(L+N\sigma_{\max}R_0)}{N\eta(1-\delta)}\cdot\label{thm_opt_theta:beta1}
\end{aligned}
\end{align}\vspace{-0.2cm}
$1$ If $\alpha(k) = \alpha\in(0,1/\sigma_{\min})$ then $\forall v\in\Vcal$
\begin{align}
\hspace{-0.4cm}\Eset[\|\theta_v(k)-\theta^*\|^2]&\leq 2\Eset\Big(\|\btheta(0)-\theta^*\|^2+2\|\Theta(0)\|^2\Big)\rho^{k}\notag\\ 
& + \frac{\beta_2}{1-\rho}\alpha + \frac{\beta_3}{(1-\rho)(1-\delta)}\alpha^2,
\label{thm_opt_theta:Ineq1}
\end{align}
where $\rho = \max\{1-\sigma_{\min}\alpha,\;\delta\}\in(0,1)$.\\
$2$ If $\alpha(k) = \alpha_0\,/\,(k+1)$ where $\alpha_0 > 1\,/\,\sigma_{\min}$ then $\forall v\in\Vcal$
\begin{align}
&\Eset\left[\|\hat{\theta}_v(k)-\theta^*\|^2\right]\notag\\
&\leq \left(\frac{\beta_2}{2\sigma_{\min}(1-\delta)}+\frac{\alpha_0\beta_3}{4\sigma_{\min}}\right)\frac{\ln(k+1)}{k+1}\cdot\label{thm_opt_theta:Ineq2}
\end{align}
\end{thm}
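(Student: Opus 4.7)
The plan is to analyze the network-average $\btheta(k):=\tfrac{1}{N}\sum_{v\in\Vcal}\theta_v(k)$ and the consensus disagreement $\Qbf\Theta(k)$ separately, and then to combine them. Double-stochasticity of $\Wbf(k)$ yields $\tfrac{1}{N}\sum_v y_v(k)=\btheta(k)$, so using nonexpansiveness of $[\cdot]_{\Xcal}$ with respect to $\theta^*\in\Xcal$ together with the crucial linear identity (which follows from Eq.~\eqref{sec_MARL_rates:opt_cond} and the fact that $\Abf$ is common to all agents)
\[
\tfrac{1}{N}\sum_v\Eset[d_v(k)\phi(s(k))\mid\mathcal{F}_k]=\tfrac{1}{N}\sum_v(b_v-\Abf\theta_v(k))=\Abf(\theta^*-\btheta(k)),
\]
we will observe that the consensus error does \emph{not} bias the centroid drift in expectation. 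Squaring the centroid update, invoking $x^\top\Abf x\ge\sigma_{\min}\|x\|^2$, and using Young's inequality to absorb the projection slack along with Assumptions~\ref{assump:reward}--\ref{assump:features} to bound the stochastic noise, we arrive at the central recursion
\[
\Eset\|\btheta(k{+}1)-\theta^*\|^2 \le (1-\sigma_{\min}\alpha(k))\Eset\|\btheta(k)-\theta^*\|^2 + C_1\alpha(k)^2 + C_2\alpha(k)\Eset\|\Qbf\Theta(k)\|,
\]
valid for $\alpha(k)<1/\sigma_{\min}$, where $C_1,C_2$ depend on $L,R_0,\sigma_{\max}$. Lemma~\ref{lem:consensus}, built on the $B$-step contraction \eqref{const:Bmix}, supplies the companion bound $\Eset\|\Qbf\Theta(k)\|\le\tfrac{1}{\eta}\delta^k\Eset\|\Theta(0)\|+\tfrac{L}{\eta}\sum_{j=0}^{k-1}\delta^{k-1-j}\alpha(j)$.

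For Part~1, with constant $\alpha\in(0,1/\sigma_{\min})$, the consensus sum collapses to $\frac{L\alpha}{\eta(1-\delta)}$ plus a $\delta^k$ transient, and the centroid recursion becomes a coupled linear system with two decay modes, $1-\sigma_{\min}\alpha$ and $\delta$. Unrolling yields a transient factor $\rho^k$ with $\rho=\max\{1-\sigma_{\min}\alpha,\delta\}$, an $O(\alpha)$ steady state from the $C_1\alpha^2$ noise divided by $\sigma_{\min}\alpha\le 1-\rho$, and an $O(\alpha^2/(1-\delta))$ steady state from the persistent consensus coupling $C_2\alpha\cdot\tfrac{L\alpha}{1-\delta}$, matching \eqref{thm_opt_theta:Ineq1}. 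Passing from $\btheta$ to individual $\theta_v$ uses $\|\theta_v-\theta^*\|^2\le 2\|\btheta-\theta^*\|^2+2\|\theta_v-\btheta\|^2$ together with $\|\theta_v(k)-\btheta(k)\|\le\|\Qbf\Theta(k)\|$. For Part~2, with $\alpha(k)=\alpha_0/(k+1)$ and $\alpha_0\sigma_{\min}>1$, we apply the strongly-convex-SGD telescoping trick: rearranging the centroid recursion to $\sigma_{\min}\alpha(k)\Eset\|\btheta(k)-\theta^*\|^2\le\Eset\|\btheta(k)-\theta^*\|^2-\Eset\|\btheta(k{+}1)-\theta^*\|^2+C_1\alpha(k)^2+C_2\alpha(k)\Eset\|\Qbf\Theta(k)\|$ and summing from $0$ to $k-1$, the first two terms telescope while $\sum\alpha(k)^2$ and $\sum\alpha(k)\Eset\|\Qbf\Theta(k)\|$ are controlled using $\alpha(j)\alpha(k)\le\alpha_0^2/((j+1)(k+1))$ and Lemma~\ref{lem:consensus}. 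Dividing by $S(k)\sim\alpha_0\ln(k{+}1)$ and applying Jensen's inequality to the convex function $\|\cdot-\theta^*\|^2$ along the running-average weights $\alpha(j)/S(k)$, then passing from $\btheta$ to $\hat\theta_v$ via the same consensus splitting, delivers the $\ln(k{+}1)/(k{+}1)$ bound \eqref{thm_opt_theta:Ineq2}.

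The main technical hurdle is the projection step, which breaks the clean identity $\btheta(k{+}1)-\theta^*=(I-\alpha(k)\Abf)(\btheta(k)-\theta^*)+\text{noise}$ because $\tfrac{1}{N}\sum_v[\cdot]_{\Xcal}\ne[\tfrac{1}{N}\sum_v\cdot]_{\Xcal}$ in general. The workaround is to invoke nonexpansiveness at each agent's update against $\theta^*$ (not against $\btheta$) before averaging, paying an extra $O(\|\Qbf\Theta(k)\|)$ cross term that is controlled by Lemma~\ref{lem:consensus}. A secondary care-point is choosing Young-inequality splits so that the contraction factor $1-\sigma_{\min}\alpha(k)$ is preserved rather than loosened to $1-\tfrac{\sigma_{\min}}{2}\alpha(k)$; this requires tying the split parameter to $\sigma_{\min}\alpha(k)$ itself.
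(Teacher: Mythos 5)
Your Part 1 is essentially the paper's own argument: the same centroid recursion with contraction factor $1-\sigma_{\min}\alpha$, the same use of Lemma \ref{lem:consensus} to control the consensus coupling, unrolling with $\rho=\max\{1-\sigma_{\min}\alpha,\delta\}$, and the same final splitting $\Eset\|\theta_v(k)-\theta^*\|^2\le 2\Eset\|\btheta(k)-\theta^*\|^2+2\Eset\|\Qbf\Theta(k)\|^2$. One structural remark: via the three-point identity $2(x-y)^T\Abf(y-z)=\|x-z\|_{\Abf}^2-\|x-y\|_{\Abf}^2-\|z-y\|_{\Abf}^2$ the paper keeps an \emph{additional} negative term $-\sigma_{\min}\alpha(k)\Eset\|\theta_u(k)-\theta^*\|^2$ in the recursion (for a fixed agent $u$, not just the centroid). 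This is cosmetic in Part 1 but is exactly what your Part 2 is missing.

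Part 2 has a genuine gap. With $\alpha(k)=\alpha_0/(k+1)$ the normalizer $S(k)=\sum_{t\le k}\alpha(t)$ grows only like $\alpha_0\ln(k+1)$, whereas the right-hand side of your telescoped inequality is $O(1)$: both $\sum_t\alpha(t)^2$ and, by Lemma \ref{lem:consensus}, $\sum_t\alpha(t)\Eset\|\Qbf\Theta(t)\|$ are convergent series. Dividing by $S(k)$ therefore yields only an $O(1/\ln(k+1))$ bound on the weighted average, not $O(\ln(k+1)/(k+1))$; the same obstruction defeats your ``consensus splitting'' step, since $\|\hat\theta_v(k)-\tfrac{1}{S(k)}\sum_t\alpha(t)\btheta(t)\|\le S(k)^{-1}\sum_t\alpha(t)\|\Qbf\Theta(t)\|=O(1/\ln(k+1))$. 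The standard strongly-convex SGD telescoping you invoke uses the wrong normalization here, and nowhere does your argument actually exploit the hypothesis $\alpha_0>1/\sigma_{\min}$. The paper's route is different: it uses $\alpha_0\sigma_{\min}>1$ to write $1-\sigma_{\min}\alpha(k)\le k/(k+1)$, multiplies the recursion through by $k+1$, and telescopes $(k+1)\Eset\|\btheta(k+1)-\theta^*\|^2\le k\,\Eset\|\btheta(k)-\theta^*\|^2+\cdots$, so the error terms accumulate as $\sum_t(t+1)\alpha(t)^2=O(\ln(k+1))$ and $\sum_t\Eset\|\Qbf\Theta(t)\|=O(\ln(k+1))$; dividing by $k+1$ gives the claimed rate. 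Moreover, because the retained negative term involves $\theta_u(k)$ itself, the rearranged inequality directly bounds $\tfrac{1}{k+1}\sum_t\Eset\|\theta_u(t)-\theta^*\|^2$, and Jensen applied to the average of the $\theta_u(t)$ yields \eqref{thm_opt_theta:Ineq2} without ever needing to control the consensus error of the running average. To repair your proof you would need to adopt this $(k+1)$-weighted telescoping and carry the agent-level negative term through from the three-point identity.
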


\section{Finite-Time Analysis of Distributed {\sf TD}(0)}
In this section, our goal is to provide the proofs of the main results in this paper, that is, the proofs of Theorems \ref{thm:opt_value} and \ref{thm:opt_theta}. We start by introducing more notation and stating some preliminary results corresponding to the updates of consensus and {\sf TD} steps.  

\subsection{Notation and Preliminary Results}\label{subsec:notation_assump}
Using $\Abf,b_v$ are given in Eq.\ \eqref{notation:A_bv} we denote by 
\begin{align}
\begin{aligned}
h_v(k) &= b_v -  \Abf\theta_v(k)\\
M_v(k) &= d_v(k)\phi(s(k)) - [b_v - \Abf\theta_v(k)],
\end{aligned}\label{analysis:hM}
\end{align}
Then, we rewrite Eq.\ \eqref{alg:theta_v} as
\begin{align}
\begin{aligned}
y_v(k) &= \sum_{u\in\Ncal_v(k)}W_{vu}(k)\theta_u(k)\\
\tilde{\theta}_v(k) &= y_v(k) + \alpha(k)(h_v(k)+M_v(k))\\
e_v(k)&= \tilde{\theta}_v(k) - [\tilde{\theta}_v(k)]_{\Xcal}\\
\theta_v(k+1)  &=\left[\tilde{\theta}_v(k)\right]_{\Xcal} = \tilde{\theta}_v(k) - e_v(k),
\end{aligned}
\label{analysis:theta_k}
\end{align}
Thus, using $\Wbf(k)$ in Assumption \ref{assump:doub_stoch} and $\Theta$ in Eq.\ \eqref{notation:Theta_Q}, the matrix form of Eq.\ \eqref{analysis:theta_k} is
\begin{align}
\begin{aligned}
&\Ybf(k) = \Wbf(k)\Theta(k)\\
&\tilde{\Theta}(k) = \Wbf(k)\Theta(k) + \alpha(k)(\Hbf(k) + \Mbf(k))\\ 
&\Ebf(k) = \tilde{\Theta}(k) - [\tilde{\Theta}(k)]_{\Xcal}\\
&\Theta(k+1)  = \tilde{\Theta}(k) - \Ebf(k),
\end{aligned}
\label{analysis:Theta_k} 
\end{align}
where $\Hbf(k)$, $\Mbf(k)$, and $\Ebf(k)$
are the matrices, whose $v-$th rows are $h_v(k)^T$, $M_v(k)^T$, and $e_v(k)^T$, respectively. Moreover, $[\tilde{\Theta}(k)]_{\Xcal}$ is the row-wise projection of $\tilde{\Theta}(k)$. Given the vectors $\theta_v$ we denote by $\btheta$ their average, i.e., 
$\btheta \triangleq 1\,/N\,\sum_{v\in\Vcal}\theta_v.$ Thus, Assumption \ref{assump:doub_stoch} and Eq.\ \eqref{analysis:Theta_k} gives\vspace{-0.1cm}
\begin{align}
&\btheta(k+1) = \btheta(k) +\alpha(k)(\bar{h}(k)+\bar{m}(k)) - \bar{e}(k),\label{analysis:theta_bar}
\end{align} 
Finally, we provide here some preliminary results, which are useful to derive our main results in the next section.  For convenience, we put their proofs in the supplementary document. We first provides an upper bound for the consensus error defined at time $k$ as $ \Theta(k) - \1\btheta(k)^T = \Qbf\Theta(k)$ in the following lemma, where $\Qbf$ is given in Eq.\ \eqref{notation:Theta_Q}.
\begin{lem}\label{lem:consensus} Let $\theta_v(k)$, for all $v\in\Vcal$, be generated by Algorithm \ref{alg:DTD0}. Let $\{\alpha(k)\}$ be a nonnegative nonincreasing sequence of stepsizes. Then there exists a constant $L>0$ such that\\ 
$1.$ The consensus error $\Qbf\Theta(k)$ satisfies\vspace{-0.2cm}
\begin{align}
\|\Qbf\Theta(k)\| \leq \delta^{k}\frac{\|\Theta(0)\|}{\eta} + \frac{2L}{\eta}\sum_{t=0}^{k-1}\delta^{k-1-t}\alpha(t).\label{lem_consensus:MainIneq}\vspace{-0.2cm}
\end{align}
$2.$ In addition, we obtain \vspace{-0.2cm}
\begin{align}
\hspace{-0.35cm}\sum_{t=0}^{k}\alpha(t)\|\Qbf\Theta(t)\| \leq \frac{\alpha(0)\|\Theta(0)\|}{\eta(1-\delta)} + \frac{2L}{\eta(1-\delta)}\sum_{t=0}^k\alpha^2(t).    \label{lem_consensus:FiniteBound}
\end{align}
\end{lem}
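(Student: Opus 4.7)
The plan is to extract a linear recursion for the consensus error $\Qbf\Theta(k)$ from the matrix update \eqref{analysis:Theta_k} and to unroll it, applying \eqref{const:Bmix} term by term. Since each $\Wbf(k)$ is doubly stochastic, a direct check gives $\Qbf\Wbf(k)=\Wbf(k)\Qbf$, so multiplying \eqref{analysis:Theta_k} on the left by $\Qbf$ and iterating yields
\begin{align*}
\Qbf\Theta(k) &= \Wbf(k-1)\cdots\Wbf(0)\,\Qbf\Theta(0)\\
&\quad+ \sum_{t=0}^{k-1}\Wbf(k-1)\cdots\Wbf(t+1)\,\Qbf\bigl[\alpha(t)(\Hbf(t)+\Mbf(t)) - \Ebf(t)\bigr],
\end{align*}
with the convention that the empty product equals $\Ibf$. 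Since $\Qbf$ is an orthogonal projection, $\|\Qbf\Xbf\|\le \|\Xbf\|$ for any matrix $\Xbf$, so it remains to control the product of $\Wbf(\cdot)$'s and the per-step increments.

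The key step is to convert \eqref{const:Bmix} into a geometric-decay bound valid for arbitrary product lengths, namely $\|\Wbf(k-1)\cdots\Wbf(s)\,\Qbf\Xbf\|\le (\delta^{k-s}/\eta)\|\Xbf\|$. I would partition the product into $\lfloor(k-s)/\Bcal\rfloor$ consecutive $\Bcal$-blocks (each a strict $\eta$-contraction on the range of $\Qbf$ by \eqref{const:Bmix}) and a leftover of length $<\Bcal$ (non-expansive since each $\Wbf(t)$ is doubly stochastic), then use $\eta^{\lfloor(k-s)/\Bcal\rfloor}\le \delta^{k-s}/\eta$ (recall $\delta=\eta^{1/\Bcal}$ and $\eta\in(0,1)$). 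In parallel, Assumptions \ref{assump:reward}, \ref{assump:features}, and \ref{assump:Xset} give a uniform constant $L$ with $\|\Hbf(t)+\Mbf(t)\|\le L$: rewards are bounded, $\|\phi(s)\|\le 1$, and $\theta_v(t)\in\Xcal$ is compact. For the projection error, because $y_v(k)$ is a convex combination of points in $\Xcal$ and $\Xcal$ is convex, $y_v(k)\in\Xcal$, hence
\begin{align*}
\|e_v(k)\|=\mathrm{dist}(\tilde\theta_v(k),\Xcal)\le \|\tilde\theta_v(k)-y_v(k)\|=\alpha(k)\|h_v(k)+M_v(k)\|\le \alpha(k)L,
\end{align*}
so $\|\Ebf(k)\|\le \alpha(k)L$ as well.

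Plugging these three ingredients into the unrolled recursion produces \eqref{lem_consensus:MainIneq}: the leading term contributes $\delta^k\|\Theta(0)\|/\eta$, and each summand contributes at most $(\delta^{k-1-t}/\eta)(L\alpha(t)+\|\Ebf(t)\|)\le (\delta^{k-1-t}/\eta)\cdot 2L\alpha(t)$, with the factor $2$ absorbing the two contributions $\alpha(t)(\Hbf+\Mbf)$ and $\Ebf$. For \eqref{lem_consensus:FiniteBound}, I would multiply \eqref{lem_consensus:MainIneq} by $\alpha(k)$ and sum from $0$ to $k$. The initial-condition contribution $(\|\Theta(0)\|/\eta)\sum_k \alpha(k)\delta^k$ is bounded by $\alpha(0)\|\Theta(0)\|/(\eta(1-\delta))$ via monotonicity $\alpha(k)\le \alpha(0)$ and a geometric sum. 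For the resulting double sum I would swap the order of summation and use the monotonicity bound $\alpha(t)\alpha(s)\le \alpha(s)^2$ for $t\ge s$, reducing it to $\sum_s \alpha(s)^2\sum_{t>s}\delta^{t-1-s}\le \sum_s \alpha(s)^2/(1-\delta)$, which gives the advertised coefficient $2L/(\eta(1-\delta))$. The main obstacle is the bookkeeping in the middle step: turning the block-wise contraction \eqref{const:Bmix} into a norm bound valid for \emph{arbitrary} lengths while absorbing the leftover into the harmless $1/\eta$ factor, and checking that a single constant $L$ genuinely covers both $\Hbf+\Mbf$ and the projection error $\Ebf$.
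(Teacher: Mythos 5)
Your proposal is correct and follows essentially the same route as the paper: unrolling the $\Qbf\Theta(k)$ recursion (the commutation $\Qbf\Wbf(k)=\Wbf(k)\Qbf$ is just the paper's identity $\Wbf\Theta-\1\btheta^T=\Wbf(\Theta-\1\btheta^T)$ in disguise), converting the $\Bcal$-block contraction \eqref{const:Bmix} into the bound $\eta^{\lfloor(k-s)/\Bcal\rfloor}\le\delta^{k-s}/\eta$, bounding $\|\Hbf+\Mbf\|\le L$ and $\|\Ebf(t)\|\le L\alpha(t)$ via the same convexity-of-$\Xcal$ argument, and handling part 2 by the same multiply-by-$\alpha(k)$, swap-summation, geometric-series manipulation. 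No gaps.
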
\vspace{-0.1cm}
Second, we provide an upper bound in expectation for the optimal distance 
$\|\btheta(k) - \theta^*\|$.
\begin{lem}\label{lem:opt}
Let $\theta_v(k)$, for all $v\in\Vcal$, be generated by Algorithm \ref{alg:DTD0}. In addition, let $\{\alpha(k)\}$ be a nonnegative nonincreasing sequence of stepsizes. Then we have
\begin{align}
&\Eset\left[\|\btheta(k+1) - \theta^*\|^2\right]\notag\\ 
&\leq \Eset[\|\btheta(k) - \theta^*\|^2]+2\alpha(k)\Eset[(\btheta(k) - \theta^*)^T\bar{h}(k)]\notag\\
&\qquad + \frac{4L^2\alpha^2(k)}{N}+  \frac{2L}{N}\alpha(k)\Eset[\left\|\Qbf\Theta(k)\right\|].\label{lem_opt:Ineq}    
\end{align}
\end{lem}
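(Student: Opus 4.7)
The plan is to exploit the non-expansiveness of the Euclidean projection onto $\Xcal$ (which contains $\theta^*$ by Assumption~\ref{assump:Xset}) to reduce the bound to an average over the pre-projection iterates $\tilde\theta_v(k)$, and then leverage the zero-mean martingale structure of the TD innovation $M_v(k)$ to eliminate the noise cross terms. I would first apply Jensen's inequality followed by non-expansiveness of $[\cdot]_\Xcal$ to obtain
$$\|\btheta(k+1)-\theta^*\|^2 \;\le\; \frac{1}{N}\sum_{v}\|\theta_v(k+1)-\theta^*\|^2 \;\le\; \frac{1}{N}\sum_{v}\|\tilde\theta_v(k)-\theta^*\|^2.$$

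Next I would substitute $\tilde\theta_v(k) - \theta^* = (y_v(k) - \theta^*) + \alpha(k)(h_v(k)+M_v(k))$ and split $y_v(k) - \theta^* = \Delta_v(k) + (\btheta(k) - \theta^*)$, where $\Delta_v(k) := y_v(k) - \btheta(k)$. The crucial identity $\sum_v \Delta_v(k) = 0$ follows because $\Wbf(k)$ is doubly stochastic and hence $\bar y(k) = \btheta(k)$. Expanding $\|\cdot\|^2$ and averaging over $v$ produces the desired $\|\btheta(k)-\theta^*\|^2$ and drift $2\alpha(k)(\btheta(k)-\theta^*)^T\bar h(k)$, together with a consensus cross term $\tfrac{2\alpha(k)}{N}\sum_v\Delta_v(k)^T(h_v(k)+M_v(k))$ and a second-order piece $\tfrac{\alpha^2(k)}{N}\sum_v\|h_v(k)+M_v(k)\|^2$. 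Taking conditional expectation, $\Eset[M_v(k)\mid\Fcal_k]=0$ (a direct consequence of the i.i.d.\ stationary sampling together with the definitions of $\Abf$ and $b_v$) kills every cross term in which $M_v(k)$ is paired with an $\Fcal_k$-measurable factor. The second-order term is controlled by $\|h_v+M_v\|^2 \le 2\|h_v\|^2 + 2\|M_v\|^2$ and the constant $L\ge\max\{\|\Hbf(k)\|_F,\|\Mbf(k)\|_F\}$, yielding the $\tfrac{4L^2\alpha^2(k)}{N}$ contribution. The surviving deterministic piece $\tfrac{2\alpha(k)}{N}\sum_v\Delta_v(k)^T h_v(k)$ is handled by Cauchy--Schwarz in the Frobenius inner product together with $\|\Delta(k)\|_F = \|\Wbf(k)\Qbf\Theta(k)\|_F \le \|\Qbf\Theta(k)\|$ (since $\Wbf(k)$ commutes with $\Qbf$ and is contractive on the orthogonal complement of $\1$), delivering the $\tfrac{2L\alpha(k)}{N}\|\Qbf\Theta(k)\|$ term.

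The main obstacle I foresee is the quadratic consensus residual $\tfrac{1}{N}\|\Delta(k)\|_F^2 \le \tfrac{1}{N}\|\Qbf\Theta(k)\|^2$ produced by expanding $\|y_v(k)-\theta^*\|^2$ around $\btheta(k)$, which does not appear explicitly on the right-hand side of the stated bound. The natural remedy is to sharpen the projection step via the variational characterization of $[\cdot]_\Xcal$, giving $\|\theta_v(k+1)-\theta^*\|^2 \le \|\tilde\theta_v(k)-\theta^*\|^2 - \|e_v(k)\|^2$, whose correction $-\tfrac{1}{N}\|\Ebf(k)\|_F^2$ on the left can absorb the unwanted quadratic residual; alternatively one can use compactness of $\Xcal$ to uniformly bound $\|\Qbf\Theta(k)\|$ and then rewrite $\|\Qbf\Theta\|^2 \le L'\|\Qbf\Theta\|$ after enlarging $L$. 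Closing this last gap is the sole delicate point; the rest of the argument is careful bookkeeping that combines doubly-stochasticity of $\Wbf(k)$ with the martingale property of the innovation.
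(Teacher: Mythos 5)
Your route --- Jensen over agents, then non-expansiveness of the projection, then expanding around $\btheta(k)$ --- is genuinely different from the paper's, and most of your bookkeeping is correct: the martingale cross terms vanish as you say, and the drift term, the $\frac{2L}{N}\alpha(k)\|\Qbf\Theta(k)\|$ term, and the $\frac{4L^2\alpha^2(k)}{N}$ term all come out with the right constants. But the obstacle you flag yourself --- the quadratic consensus residual $\frac{1}{N}\|\Delta(k)\|^2\le\frac{1}{N}\|\Qbf\Theta(k)\|^2$, which carries no factor of $\alpha(k)$ --- is a genuine gap, and neither of your proposed remedies closes it. Remedy (a) would require $\|\Ebf(k)\|^2\ge\|\Wbf(k)\Qbf\Theta(k)\|^2$ pointwise, but there is no such domination: the projection is inactive (so $\Ebf(k)=0$) whenever all the $\tilde\theta_v(k)$ happen to lie in the interior of $\Xcal$, while the consensus error can be strictly positive at that same instant. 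Remedy (b) replaces $\|\Qbf\Theta(k)\|^2$ by $L'\|\Qbf\Theta(k)\|$, but the missing $\alpha(k)$ is exactly what the downstream arguments need: the proofs of Theorems \ref{thm:opt_value} and \ref{thm:opt_theta} control $\sum_k\alpha(k)\|\Qbf\Theta(k)\|$ via Eq.~\eqref{lem_consensus:FiniteBound}, whereas a term $\sum_k\|\Qbf\Theta(k)\|$ grows like $\sum_k\alpha(k)$ and, after dividing by $\sum_k\alpha(k)$, leaves an $O(1)$ error that destroys the $1/\sqrt{k+1}$ rate. (If you instead keep $\|\Qbf\Theta(k)\|^2$ and bound it by the square of Eq.~\eqref{lem_consensus:MainIneq}, the theorems survive with an extra $1/(1-\delta)$ factor, but you will have proved a different lemma than the one stated.)

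The paper avoids the residual entirely by averaging the update \emph{before} taking norms: double stochasticity gives $\frac{1}{N}\sum_v y_v(k)=\btheta(k)$ exactly, so the recursion for the average, Eq.~\eqref{analysis:theta_bar}, contains no consensus term at all, and $\|\btheta(k+1)-\theta^*\|^2$ is expanded directly from it. The price is that projection and averaging do not commute, so an averaged projection error $\ebar(k)$ appears; its cross term is controlled with the variational inequality $(\Pcal_{\Xcal}[x]-x)^T(x-y)\le-\|\Pcal_{\Xcal}[x]-x\|^2$ applied per agent (Lemma \ref{apx_lem:projection}), and that is the actual source of the $\frac{2L}{N}\alpha(k)\|\Qbf\Theta(k)\|$ term in the stated bound. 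To salvage your per-agent Jensen approach you must either restate the lemma with an additional $\frac{1}{N}\Eset[\|\Qbf\Theta(k)\|^2]$ term on the right-hand side, or switch to the averaged recursion.
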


\subsection{Proof of Theorem \ref{thm:opt_value} }\label{sec_analysis:proof_thm1} 

By Eq.\ \eqref{sec_MARL_rates:opt_cond} we have  $\bar{b} = \Abf\theta^*$. Thus, Eq.\ \eqref{analysis:hM} gives 
\begin{align}
&\bar{h}(k) = \bar{b} -  \Abf\btheta(k) = \Abf(\theta^* - \btheta(k))\notag\\
&= \Abf(\theta^* - \theta_u(k)) + \Abf(\theta_u(k)-\btheta(k)).\label{thm_opt_value:Eq1a}
\end{align}
Recall that  $\tilde{J}(\btheta(k)) = \Phi\btheta(k)$. In addition, since the data are sampled i.i.d from the stationary distribution, we have $\Phi^T D\Phi = \Eset\left[\phi(s)\phi(s)^T\right]$. Thus, given a $\theta\in\Rset^K$ consider
\begin{align}
&\|\tilde{J}(\theta)-\tilde{J}(\theta^*)\|_D^2 = (\theta-\theta^*)^T\Phi^T D\Phi(\theta-\theta^*)\notag\\
&= \Eset\left[(\theta-\theta^*)^T\phi(s)\phi(s)^T(\theta-\theta^*)\right]\notag\\ 
&= \Eset\left[\|\phi(s)^T(\theta-\theta^*)\|^2\right].\label{thm_opt_value:Eq1b}
\end{align}
Fix an index $u\in\Vcal$. Using Eq.\ \eqref{thm_opt_value:Eq1a} we consider 
\begin{align}
&\Eset\left[(\btheta(k)-\theta^*)^T\bar{h}(k)\right]\notag\\ 
&= \Eset\left[(\theta_u(k)-\theta^*)^T\bar{h}(k)\right]+\Eset\left[(\btheta(k)-\theta_u(k))^T\bar{h}(k)\right]\notag\\
&\stackrel{\eqref{thm_opt_value:Eq1a}}{=} \Eset\left[(\theta_u(k)-\theta^*)^T\Abf(\theta^*-\theta_u(k)) \right]\notag\\
&\qquad + 2 \Eset\left[(\theta_u(k)-\theta^*)^T\Abf(\theta_u(k)-\btheta(k)) \right]\notag\\
&\qquad + \Eset\left[(\btheta(k)-\theta_u(k))^T\Abf(\theta_u(k)-\btheta(k)) \right]\notag\\
&\leq \Eset\left[(\theta_u(k)-\theta^*)^T\Abf(\theta^*-\theta_u(k)) \right]\notag\\
&\qquad + 2 \Eset\left[(\theta_u(k)-\theta^*)^T\Abf(\theta_u(k)-\btheta(k)) \right]\notag\\
&\leq \Eset\left[(\theta_u(k)-\theta^*)^T\Abf(\theta^*-\theta_u(k)) \right]\notag\\
&\qquad + 2\sigma_{\max}R_0 \Eset\left[\|\theta_u(k)-\btheta(k)\| \right], \label{thm_opt_value:Eq2a}
\end{align}
where the first inequality is due to $\Abf$ is positive definite. Using the definition of $\Abf$ in Eq.\ \eqref{notation:A_bv} and Eqs.\ \eqref{thm_opt_value:Eq1a} and \eqref{thm_opt_value:Eq1b} we analize the first term on the right-hand side of Eq.\ \eqref{thm_opt_value:Eq2a}
{\small
\begin{align*}
&\Eset\left[(\theta_u(k)-\theta^*)^T\Abf(\theta^*-\theta_u(k)) \right]\notag\\
&= \Eset\left[(\theta_u(k)-\theta^*)^T\phi(s)\Big(\phi(s)-\gamma\phi(s')\Big)^T(\theta^*-\theta_u(k))\right]\notag\\
&= -\Eset\left[(\theta_u(k)-\theta^*)^T\phi(s)\phi(s)^T(\theta_u(k)-\theta^*)\right]\notag\\
&\quad + \gamma\Eset\left[(\theta_u(k)-\theta^*)^T\phi(s)\phi(s')^T(\theta_u(k)-\theta^*)\right]\notag\\
&\leq -\Eset\left[\|\phi(s)^T(\theta_u(k)-\theta^*)\|^2\right]\notag\\
&+ \gamma\sqrt{\Eset\left[\|\phi(s)^T(\theta_u(k)-\theta^*)\|^2\right]}\sqrt{\Eset\left[\|\phi(s')^T(\theta_u(k)-\theta^*)\|^2\right]}\notag\\
&= -(1-\gamma)\Eset\left[\|\phi(s)^T(\theta_u(k)-\theta^*)\|^2\right]\notag\\
&\stackrel{\eqref{thm_opt_value:Eq1b}}{=} -(1-\gamma)\|\tilde{J}(\theta_u(k))-\tilde{J}(\theta^*)\|_D^2,
\end{align*}}
where the inequality is due to the Cauchy-Schwarz inequality.
Substituting the preceding relation into Eq.\ \eqref{thm_opt_value:Eq2a} and using $\|\theta_u(k)-\btheta(k)\|\leq \|\Qbf\Theta(k)\|$ we obtain
\begin{align}
\Eset\left[(\btheta(k)-\theta^*)^T\bar{h}(k)\right]
&\leq -(1-\gamma)\|\tilde{J}(\theta_u(k))-\tilde{J}(\theta^*)\|_D^2\notag\\
&\quad + 2\sigma_{\max}R_0 \Eset\left[\|\Qbf\Theta(k)\| \right].\label{thm_opt_value:Eq2b}
\end{align}
Using Eq.\ \eqref{thm_opt_value:Eq2b} into Eq.\ \eqref{lem_opt:Ineq} gives\vspace{-0.2cm}
\begin{align*}
&\Eset\left[\|\btheta(k+1)-\theta^*\|^2\right]\notag\\ 
&\leq \Eset\left[\|\btheta(k)-\theta^*\|^2\right]+ \frac{4L^2}{N}\alpha^2(k) \notag\\
&\qquad+ 2\left(\frac{L}{N}+2\sigma_{\max}R_0\right)\Eset\left[\alpha(k)\left\|\Qbf\Theta(k)\right\|\right] \notag\\
&\qquad -2(1-\gamma)\alpha(k)\|\tilde{J}(\theta_u(k))-\tilde{J}(\theta^*)\|_D^2.\vspace{-0.2cm}
\end{align*}
Rearranging and summing up both sides of the preceding relation over $k$ from $0$ to $K$ for some constant $K>0$ gives\vspace{-0.2cm}
\begin{align}
&2(1-\gamma)\sum_{k=0}^K\alpha(k)\|\tilde{J}(\theta_u(k))-\tilde{J}(\theta^*)\|_D^2\notag\\
&\leq \Eset\left[\|\btheta(0)-\theta^*\|^2\right] + \frac{4L^2}{N}\sum_{k=0}^{K}\alpha^2(k)\notag\\
&\qquad + \frac{2L+4N\sigma_{\max}R_0}{N}\sum_{k=0}^K\Eset\left[\alpha(k)\|\Qbf\Theta(k)\|\right]\notag\\
&\leq \Eset\left[\|\btheta(0)-\theta^*\|^2\right] +\frac{2\alpha(0)\Eset\left[\|\Theta(0)\|\right]\left(L+2N\sigma_{\max}R_0\right)}{N\eta(1-\delta)}\notag\\ 
&\qquad +\frac{8L(L+N\sigma_{\max}R_0)}{N\eta(1-\delta)}\sum_{k=0}^K\alpha^2(k),\label{thm_opt_value:Eq2}
\end{align}
where the last inequality is due to Eq.\ \eqref{lem_consensus:FiniteBound}. We now consider two choices of $\alpha(k)$ with  $\beta_0$ and $\beta_1$ as defined in Eq.\ \eqref{thm_opt_value:beta}. 

$1.$ Let $\alpha(k) = \alpha > 0$. Dividing Eq.\ \eqref{thm_opt_value:Eq2} by $2\alpha(1-\gamma)(K+1)$ and using the Jensen's inequality yields Eq.\ \eqref{thm_opt_value:Ineq1}.

$2.$ Let $\alpha(k) =  1\,/\,\sqrt{k+1}$. Using the integral test yields\vspace{-0.2cm}
\begin{align*}
\sum_{t=0}^{K}\alpha(k) \geq 2\sqrt{K+1},\quad \sum_{t=0}^{K}\alpha^2(k) \leq (1+\ln(K+1)).   
\end{align*}
Thus, dividing Eq.\ \eqref{thm_opt_value:Eq2} by ${~2(1-\gamma)\sum_{k=0}^{K}\alpha(k)}$ and using the Jensen's inequality give Eq.\ \eqref{thm_opt_value:Ineq2}.

\subsection{Proof of Theorem \ref{thm:opt_theta} }\label{sec_analysis:proof_thm2} 
Fix a $u\in\Vcal$. Note that $2(x-y)^T\Abf(y-z) = \|x-z\|_{\Abf}^2-\|x-y\|_{\Abf}^2-\|z-y\|_{\Abf}^2$ $\forall x,y,z$. Thus, Eq. \eqref{thm_opt_value:Eq1a} gives 
\begin{align*}
&2\Eset\left[(\btheta(k)-\theta^*)^T\bar{h}(k)\right]\notag\\ 
&= 2\Eset\left[(\theta_u(k)-\theta^*)^T\Abf(\theta^*-\btheta(k))\right]\notag\\
&\qquad +2\Eset\left[(\btheta(k)-\theta_u(k))^T\Abf(\theta^*-\btheta(k))\right]\notag\\
&= -\Eset\left[\|\btheta(k)-\theta^*\|_{\Abf}^2\right]-\Eset\left[\|\theta_u(k)-\theta^*\|_{\Abf}^2\right]\notag\\
&\qquad + \Eset\left[\|\btheta(k)-\theta_u(k)\|_{\Abf}^2\right]\notag\\ &\qquad+2\Eset\left[(\btheta(k)-\theta_u(k))^T\Abf(\theta^*-\btheta(k))\right],
\end{align*}
which gives
\begin{align}
&2\Eset\left[(\btheta(k)-\theta^*)^T\bar{h}(k)\right]\notag\\ 
&= -\Eset\left[\|\btheta(k)-\theta^*\|_{\Abf}^2\right]-\Eset\left[\|\theta_u(k)-\theta^*\|_{\Abf}^2\right]\notag\\
&\qquad + \Eset\left[(\btheta(k)-\theta_u(k))^T\Abf(\theta^*-\theta_u(k))\right]\notag\\ 
&\qquad +\Eset\left[(\btheta(k)-\theta_u(k))^T\Abf(\theta^*-\btheta(k))\right]\notag\\
&\leq -\sigma_{\min}\Eset\left[\|\btheta(k)-\theta^*\|^2\right]-\sigma_{\min}\Eset\left[\|\theta_u(k)-\theta^*\|^2\right]\notag\\
&\qquad + 2R_0\sigma_{\max}\Eset\left[\|\btheta(k)-\theta_u(k)\|\right]. \label{thm_opt_theta:Eq1a}
\end{align}
Using Eqs.\ \eqref{thm_opt_theta:Eq1a} into Eq.\ \eqref{lem_opt:Ineq} gives
\begin{align}
&\Eset\left[\|\btheta(k+1)-\theta^*\|^2\right]\notag\\ 
&\leq \Eset\left[\|\btheta(k)-\theta^*\|^2\right]  +2\alpha(k)\Eset\left[(\btheta(k)-\theta^*)^T\bar{h}(k)\right]\notag\\
&\qquad  + \frac{4L^2}{N}\alpha^2(k)+\frac{2L}{N}\Eset\left[\alpha(k)\left\|\Qbf\Theta(k)\right\|\right]\notag\\  
&\leq \left(1-\sigma_{\min}\alpha(k)\right)\Eset\left[\|\btheta(k)-\theta^*\|^2\right]+ \frac{4L^2}{N}\alpha^2(k) \notag\\
&\qquad+ \frac{2(L+N\sigma_{\max}R_0)}{N}\Eset\left[\alpha(k)\left\|\Qbf\Theta(k)\right\|\right] \notag\\
&\qquad -\sigma_{\min}\alpha(k)\Eset\left[\|\theta_u(k)-\theta^*\|^2\right].\label{thm_opt_theta:Eq1}
\end{align}
We now consider two choices of stepsizes $\alpha(k)$ with $\beta_2,\beta_3$ given in Eq.\ \eqref{thm_opt_theta:beta1} as follows.

$1.$ Let $\alpha(k) = \alpha\in(0,1\,/\,\sigma_{\min})$ and recall that $\rho = \max\{1-\sigma_{\min}\alpha,\;\delta\}\in(0,1)$. In addition, Eq.\ \eqref{lem_consensus:MainIneq} yields
\begin{align*}
\|\Qbf\Theta(k)\| \leq \frac{\|\Theta(0)\|}{\eta} + \frac{2L\alpha}{\eta(1-\delta)}\cdot 
\end{align*}
Thus, recursively updating Eq.\ \eqref{thm_opt_theta:Eq1}, dropping the negative term, and using the preceding relation yield
\begin{align}
 &\Eset\left[\|\btheta(k+1)-\theta^*\|^2\right]\notag\\ 
&\leq \rho^{k+1}\Eset\left[\|\btheta(0)-\theta^*\|^2\right]+ \frac{4L^2}{N}\alpha^2\sum_{t=0}^{k}\rho^{k-t} \notag\\
&\qquad+ \frac{2(L+N\sigma_{\max}R_0)}{N\eta}\Eset\left[\|\Theta(0)\|\right]\alpha\sum_{t=0}^{k}\rho^{k-t} \notag\\
&\qquad + \frac{L+N\sigma_{\max}R_0}{N\eta}\frac{4L\alpha^2}{1-\delta}\sum_{t=0}^{k}\rho^{k-t}\notag\\
 &\leq \rho^{k+1}\Eset\left[\|\btheta(0)-\theta^*\|^2\right]\notag\\
&\qquad + \frac{2(L+N\sigma_{\max}R_0)}{N\eta}\frac{2\Eset\left[\|\Theta(0)\|\right]\alpha}{1-\rho}\notag\\
&\qquad + \frac{4L(2L+N\sigma_{\max}R_0)}{N\eta(1-\delta)}\frac{\alpha^2}{1-\rho}\cdot
\label{thm_opt_theta:Eq2a}
\end{align}
On the other hand, Eq.\ \eqref{lem_consensus:MainIneq} yields 
\begin{align}
\Eset\left[\|\Qbf\Theta(k)\|^2\right] \leq 2\Eset\left[\|\Theta(0)\|^2\right]\delta^{2k} + \frac{8L^2\alpha^2}{(1-\delta)^2}\cdot\label{thm_opt_theta:Eq2b}
\end{align}
Thus, we obtain Eq.\ \eqref{thm_opt_theta:Ineq1} by using Eqs.\ \eqref{thm_opt_theta:Eq2a} and \eqref{thm_opt_theta:Eq2b}, and $$\Eset[\|\theta_u(k)-\theta^*\|^2]\leq 2\Eset[\|\btheta(k)-\theta^*\|^2]+2\Eset[\|\theta_u(k)-\btheta(k)\|^2].$$ 


$2.$ Let $\alpha(k) = \alpha_0\,/\,(k+1)$ where $\alpha_0 > 1\,/\,\sigma_{\min}$, implying
$1 - \sigma_{\min}\alpha(k) \leq k/(k+1).$ Thus, Eq.\ \eqref{thm_opt_theta:Eq1} gives
\begin{align}
&\Eset\left[\|\btheta(k+1)-\theta^*\|^2\right]\notag\\
&\leq \frac{k}{k+1}\Eset\left[\|\btheta(k)-\theta^*\|^2\right] + \frac{4L^2\alpha_0^2}{N}\frac{1}{(k+1)^2}\notag\\
&\qquad+ \frac{2\alpha_0(L+N\sigma_{\max}R_0)}{N}\frac{\Eset\left[\left\|\Qbf\Theta(k)\right\|\right]}{k+1} \notag\\
&\qquad -\alpha(0)\sigma_{\min}\frac{\Eset\left[\|\theta_u(k)-\theta^*\|^2\right]}{k+1}\notag\\
&\leq \frac{4L^2\alpha_0^2}{N}\sum_{t=0}^k\frac{1}{(t+1)}\frac{1}{k+1}\notag\\
&\qquad + \frac{2\alpha_0(L+N\sigma_{\max}R_0)}{N}\frac{\sum_{t=0}^k\Eset\left[\left\|\Qbf\Theta(t)\right\|\right]}{k+1} \notag\\
&\qquad -\alpha_0\sigma_{\min}\frac{\sum_{t=0}^k\Eset\left[\|\theta_u(k)-\theta^*\|^2\right]}{k+1}.\label{thm_opt_theta:Eq3a}
\end{align}
The integral test gives
$\sum_{t=0}^k1\,/\,(k+1)\leq 1+\ln(k+1).$ In addition, Eq.\ \eqref{lem_consensus:MainIneq} yields\vspace{-0.3cm} 
\begin{align*}
&\sum_{t=0}^k\Eset\left[\|\Qbf\Theta(t)\|\right]\notag\\ 
&\leq \frac{\Eset\left[\|\Qbf\Theta(0)\|\right]}{\eta}\sum_{t=0}^{k}\delta^t +  \frac{2L}{\eta}\sum_{t=0}^{k}\sum_{\ell=0}^{t}\delta^{t-1-\ell}\alpha(\ell)\notag\\
&\leq \frac{\Eset\left[\|\Qbf\Theta(0)\|\right]}{\eta(1-\delta)} + \frac{2L}{\eta}\sum_{\ell=0}^{k}\alpha(\ell)\sum_{t=\ell+1}^{k}\delta^{t}\notag\\
&\leq \frac{\Eset\left[\|\Qbf\Theta(0)\|\right]}{\eta(1-\delta)} + \frac{2L\alpha_0(1+\ln(k+1))}{\eta(1-\delta)}\cdot
\end{align*}
Thus, using the preceding relation into Eq.\ \eqref{thm_opt_theta:Eq3a}, rearranging the terms, and using the Jensen's inequality gives Eq.\ \eqref{thm_opt_theta:Ineq2}.



\section{Conclusion and Discussion}
In this paper, we consider a distributed consensus-based variant of the popular {\sf TD}$(0)$ algorithm for estimating the value function of a given stationary policy. Our main contribution is to provide a finite-time analysis for the performance of distributed {\sf TD}(0), which has not been addressed in the existing literature of {\sf MARL}. In particular, our results mirror what we would expect from using distributed {\sf SGD} for solving static convex optimization problems. A few interesting questions left from this work are the finite-time analysis for the general distributed {\sf TD}$(\lambda)$ and when the policy is not stationary, e.g., distributed actor-critic methods. We believe that this paper establishes fundamental results that enable one to tackle these problems, which we leave for our future research.   

 \section*{Acknowledgements}
This work is partially supported by ARL DCIST CRA W911NF-17-2-0181 and the SRC JUMP program.

\bibliography{refs}
\bibliographystyle{icml2019}

\newpage
\appendix

\section{Supplementary Documents}

Here, we provide the details of analysis of Lemmas \ref{lem:consensus} and \ref{lem:opt}. We first analyze the impact of projection error in Eq.\ \eqref{analysis:theta_k}, where we  will utilize the following result studied in \cite{NedicOP2010}.
\begin{lem}\label{apx_lem:projection_Nedic}
Let $\Xcal$ be a nonempty closed convex set in $\Rset^K$. Then, we have for any $x\in\mathbb{R}^K$ and for all $y\in\Xcal$
\begin{enumerate}[leftmargin = 6mm]
\item[(a)] $(\Pcal_{\Xcal}[x]-x)^T(x-y)\leq -\|\Pcal_{\Xcal}[x]-x\|^2$.
\item[(b)] $\|\Pcal_{\Xcal}[x]-y\|^2\leq \|x-y\|^2 -\|\Pcal_{\Xcal}[x]-x\|^2$.
\end{enumerate}
\end{lem}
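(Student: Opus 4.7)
The plan is to base everything on the variational characterization of the Euclidean projection: for any $x\in\Rset^K$, the point $z := \Pcal_{\Xcal}[x]$ is the unique minimizer of $\|x-\cdot\|^2$ over the nonempty closed convex set $\Xcal$, and the first-order optimality condition for this convex program reads
\begin{align*}
(z-x)^T(y-z)\geq 0,\qquad \forall y\in\Xcal.
\end{align*}
I would either cite this as the standard projection/normal cone inequality, or derive it in one line by differentiating $t\mapsto \|x-(z+t(y-z))\|^2$ at $t=0^+$, which is nonnegative since the minimum over $\Xcal$ is attained at $z$ and $\Xcal$ is convex.

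For part (a), the idea is to rewrite $x-y = (x-z) + (z-y)$ and pair with $z-x$:
\begin{align*}
(z-x)^T(x-y) = (z-x)^T(x-z) + (z-x)^T(z-y) = -\|z-x\|^2 - (z-x)^T(y-z).
\end{align*}
By the variational inequality above, $(z-x)^T(y-z)\geq 0$, so the last term is nonpositive and dropping it yields $(z-x)^T(x-y)\leq -\|z-x\|^2$, which is exactly (a).

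For part (b), I would simply expand the square and plug in (a):
\begin{align*}
\|z-y\|^2 = \|(z-x)+(x-y)\|^2 = \|z-x\|^2 + 2(z-x)^T(x-y) + \|x-y\|^2.
\end{align*}
Applying (a) to bound $2(z-x)^T(x-y)\leq -2\|z-x\|^2$ and substituting gives
\begin{align*}
\|z-y\|^2 \leq \|x-y\|^2 - \|z-x\|^2,
\end{align*}
which is (b).

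There is no real obstacle here; the only nontrivial ingredient is the first-order optimality inequality for the projection, which I would either invoke as standard or prove in one displayed line. Everything else is algebra on a single quadratic expansion.
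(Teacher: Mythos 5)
Your proof is correct: the variational inequality $(\Pcal_{\Xcal}[x]-x)^T(y-\Pcal_{\Xcal}[x])\geq 0$ is exactly the right starting point, and the two algebraic steps deducing (a) and then (b) are sound. The paper itself gives no proof of this lemma---it is imported verbatim from the cited reference \cite{NedicOP2010} as a standard fact about Euclidean projections---and your argument is precisely the standard one that reference uses, so there is nothing substantive to compare.
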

Recall that 
\begin{align*}
&\Abf = \Eset_{\pi}\left[\phi(s)(\phi(s)-\gamma\phi(s'))^T\right],\quad b_v = \Eset_{\pi}[r_v\phi(s)]\notag\\
&h_v(k) = b_v -  \Abf\theta_v(k)\\
&M_v(k) = d_v(k)\phi(s(k)) - [b_v - \Abf\theta_v(k)] 
\end{align*}
We now characterize the impact of projection error $e_v$ in Eq.\ \eqref{analysis:theta_k} as given in the following lemma.
\begin{lem}\label{apx_lem:projection}
Suppose that Assumptions \ref{assump:doub_stoch} and \ref{assump:reward}--\ref{assump:Xset} hold. Let $\theta_v(k)$, for all $v\in\Vcal$, be generated by Algorithm \ref{alg:DTD0}. In addition, let $\{\alpha(k)\}$ be a nonnegative nonincreasing sequence of stepsizes. Then, there exists a constant $L_v>0$, for all $v\in\Vcal$, such that 
\begin{align}
\|e_v(k)\| \leq L_v\alpha(k).\label{apx_lem_projection:ev_bound} 
\end{align}\vspace{-0.2cm} 
In addition, let $L = \sum_{v\in\Vcal}L_v$. Then we have
\begin{align}
&\hspace{-0.5cm}-\left(\btheta(k)-\theta^*+\alpha(k)\bar{h}(k)\right)^T\ebar(k)\notag\\
&\hspace{-0.5cm}\quad\leq \frac{L}{N}\alpha(k)\left\|\Qbf\Theta(k)\right\| + \frac{2L^2}{N}\alpha^2(k) - \frac{1}{N}\|\Ebf(k)\|^2.\label{apx_lem_projection:opt}
\end{align}
\end{lem}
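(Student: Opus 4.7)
I will handle the two parts in sequence. The bound $\|e_v(k)\| \leq L_v\alpha(k)$ will follow from convexity of $\Xcal$: an easy induction (using the projection in Eq.~\eqref{analysis:theta_k} together with $\theta_v(0) \in \Xcal$) shows every $\theta_u(k)$ lies in $\Xcal$, and since the row $W_{v\cdot}(k)$ consists of nonnegative entries summing to one by Assumption \ref{assump:doub_stoch}, $y_v(k) = \sum_u W_{vu}(k)\theta_u(k)$ is a convex combination of points in $\Xcal$ and hence lies in $\Xcal$. Non-expansiveness of the projection then gives $\|e_v(k)\| = \mathrm{dist}(\tilde{\theta}_v(k), \Xcal) \leq \|\tilde{\theta}_v(k) - y_v(k)\| = \alpha(k)\|h_v(k) + M_v(k)\|$, and Assumptions \ref{assump:reward}--\ref{assump:Xset} ensure the right-hand factor is uniformly bounded by some $L_v$, which I take to also dominate $\|h_v(k)\|$ and $\|M_v(k)\|$.

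\textbf{Part 2: setup.} I apply Lemma \ref{apx_lem:projection_Nedic}(a) with $x = \tilde{\theta}_v(k)$ and $y = \theta^* \in \Xcal$ to obtain $-e_v(k)^T(\tilde{\theta}_v(k) - \theta^*) \leq -\|e_v(k)\|^2$. I then split
\begin{align*}
\tilde{\theta}_v(k) - \theta^* = \bigl(\btheta(k) - \theta^* + \alpha(k)\bar{h}(k)\bigr) + \bigl(y_v(k) - \btheta(k)\bigr) + \alpha(k)\bigl[(h_v(k) + M_v(k)) - \bar{h}(k)\bigr],
\end{align*}
rearrange, and average over $v$ using the doubly stochastic identity $\frac{1}{N}\sum_v y_v(k) = \btheta(k)$, obtaining
\begin{align*}
-\bar{e}(k)^T\bigl(\btheta(k) - \theta^* + \alpha(k)\bar{h}(k)\bigr) &\leq -\tfrac{1}{N}\|\Ebf(k)\|^2 + \tfrac{1}{N}\sum_v e_v(k)^T(y_v(k) - \btheta(k))\\
&\quad + \tfrac{\alpha(k)}{N}\sum_v e_v(k)^T\bigl[(h_v(k) + M_v(k)) - \bar{h}(k)\bigr].
\end{align*}

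\textbf{Residuals and main obstacle.} For the first residual I will use Cauchy-Schwarz with the contraction $\sum_v \|y_v(k) - \btheta(k)\|^2 \leq \|\Qbf\Theta(k)\|^2$ (which follows from $\Qbf W(k) = W(k)\Qbf$ and $\|W(k)\|_{\mathrm{op}} \leq 1$) together with the part-1 bound $\|\Ebf(k)\| \leq L\alpha(k)$, yielding $\frac{L\alpha(k)}{N}\|\Qbf\Theta(k)\|$. For the second, termwise Cauchy-Schwarz with $\|e_v(k)\| \leq L_v\alpha(k)$, $\|h_v(k) + M_v(k)\| \leq L_v$, and $\|\bar{h}(k)\| \leq L/N$ gives at most $\frac{\alpha^2(k)}{N}\sum_v L_v(L_v + L/N) \leq \frac{2L^2\alpha^2(k)}{N}$. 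The main subtlety is that the statement is deterministic, so the noise $M_v(k)$ cannot be eliminated by the usual conditional zero-mean argument used in stochastic approximation. The workaround is to group $M_v(k)$ with $h_v(k) - \bar{h}(k)$ rather than with $h_v(k)$ alone, so that the combined quantity $(h_v(k) + M_v(k)) - \bar{h}(k)$ is controlled by the very same $L_v$ that bounds $\|e_v(k)\|/\alpha(k)$; this is what produces the precise $O(\alpha^2)$ coefficient $2L^2/N$ without any expectation.
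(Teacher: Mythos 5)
Your proposal is correct and follows essentially the same route as the paper's proof: part 1 via membership of $y_v(k)$ in $\Xcal$ (convex combination under the doubly stochastic weights) plus non-expansiveness of the projection, and part 2 via Lemma \ref{apx_lem:projection_Nedic}(a) applied to $\tilde{\theta}_v(k)$ and $\theta^*$, with the residual $\btheta(k)+\alpha(k)\bar{h}(k)-\tilde{\theta}_v(k)$ controlled by the consensus error and an $O(\alpha^2)$ term. The only differences are cosmetic regroupings (the paper bounds $\alpha(k)\|h_v+M_v\|$ and $\alpha(k)\|\bar h\|$ separately rather than as $\alpha(k)[(h_v+M_v)-\bar h]$), and the paper likewise treats $M_v(k)$ purely deterministically through the uniform bound $\|h_v(k)\|+\|M_v(k)\|\le L_v$, so your "workaround" is exactly what the paper does.
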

\begin{proof}
We first show Eq.\ \eqref{apx_lem_projection:ev_bound}. Indeed, by Assumptions \ref{assump:reward}--\ref{assump:Xset} there exists a positive constant $L_v$, for all $v\in\Vcal$, such that 
\begin{align}
\|h_v(k)\| + \|M_v(k+1)\| \leq L_v.\label{apx_lem_projection:Eq1a}
\end{align}
Moreover, since $\theta_v(k)\in\Xcal$, for all $v\in\Vcal$, $y_v(k)$ in Eq.\ \eqref{alg:theta_v} is in $\Xcal$ since $\Wbf(k)$ is doubly stochastic. Thus, using Lemma \ref{apx_lem:projection_Nedic}(b) with $y = y_v(k)\in\Xcal$ yields Eq.\ \eqref{apx_lem_projection:ev_bound}
\begin{align*}
\|e_v(k)\|^2 &= \|\tilde{\theta}_v(k)-[\tilde{\theta}_v(k)]_{\Xcal}\|^2\notag\\
&\leq \left\|\tilde{\theta}_v(k) - y_v(k)\right\|^2\notag\\
&\stackrel{\eqref{analysis:theta_k}}{=} \left\|\alpha(k)(h_v(k)+M_v(k+1))\right\|^2\notag\\
&\stackrel{\eqref{apx_lem_projection:Eq1a}}{\leq} L_v^2\alpha^2(k),
\end{align*}
which by letting $L=\sum_{v\in\Vcal}L_v$ and using the Cauchy-Schwarz inequality also implies
\begin{align}
\|\Ebf(k)\|^2 = \sum_{v\in\Vcal}\|e_v(k)\|^2 \leq \alpha^2(k)L^2.\label{apx_lem_projection:Eq1b}
\end{align}
We now show Eq.\ \eqref{apx_lem_projection:opt}. Indeed, by Lemma \ref{apx_lem:projection_Nedic}(a) we have
\begin{align}
&-\left(\btheta(k)-\theta^*+\alpha(k)\bar{h}(k)\right)^T\ebar(k)\notag\\
&= -\frac{1}{N}\sum_{v\in\Vcal}(\btheta(k)-\theta^* +\alpha(k)\bar{h}(k))^Te_v(k)\notag\\
&= -\frac{1}{N}\sum_{v\in\Vcal}(\btheta(k)+\alpha(k)\bar{h}(k)-\tilde{\theta}_v(k))^Te_v(k)\notag\\
&\qquad -\frac{1}{N}\sum_{v\in\Vcal}(\tilde{\theta}_v(k)-\theta^*)^Te_v(k)\notag\\
&\leq \frac{1}{N}\sum_{v\in\Vcal}\left\|\btheta(k)-\tilde{\theta}_v(k)\right\|\|e_v(k)\|\notag\\
&\qquad + \frac{1}{N}\sum_{v\in\Vcal}\left\|\alpha(k)\bar{h}(k)\right\|\|e_v(k)\|\notag\\
&\qquad -\frac{1}{N}\sum_{v\in\Vcal}(\tilde{\theta}_v(k)-\theta^*)^T\left(\tilde{\theta}_v(k)-\left[\tilde{\theta}_v(k)\right]_{\Xcal}\right)\notag\\
&\leq \frac{1}{N}\sum_{v\in\Vcal}\left\|\btheta(k)-\tilde{\theta}_v(k)\right\|\|e_v(k)\|\notag\\
&\qquad + \frac{1}{N}\sum_{v\in\Vcal}\left\|\alpha(k)\bar{h}(k)\right\|\|e_v(k)\|\notag\\
&\qquad -\frac{1}{N}\|\Ebf(k)\|^2,\label{apx_lem_projection:Eq2}
\end{align}
where the last inequality is due to Lemma \ref{apx_lem:projection_Nedic}(a). First, using Eqs.\ \eqref{analysis:theta_k}, \eqref{apx_lem_projection:ev_bound}, and \eqref{apx_lem_projection:Eq1a} gives
\begin{align*}
&\sum_{v\in\Vcal}\left\|\btheta(k)-\tilde{\theta}_v(k)\right\|\|e_v(k)\|\notag\\
&\leq \alpha(k)\sum_{v\in\Vcal}L_v\left\|\btheta(k)-\tilde{\theta}_v(k)\right\|\notag\\
&\leq \alpha(k)\sum_{v\in\Vcal}L_v\left\|\btheta(k)-\sum_{u\in\Ncal_{v}(k)}W_{vu}(k)\theta_{u}(k)\right\|\notag\\
&\qquad + \alpha(k)\sum_{v\in\Vcal}L_v\left\| \alpha(k)(h_v(k)+M_{v}(k))\right\| \notag\\
&\leq L\alpha(k)\|\Qbf\Theta(k)\| + L^2\alpha^2(k),
\end{align*}
where the last inequality is due to the Cauchy-Schwarz inequality and $\Wbf$ is doubly stochastic. Second, using Eqs.\ \eqref{apx_lem_projection:ev_bound} and \eqref{apx_lem_projection:Eq1a} yields
\begin{align*}
&\sum_{v\in\Vcal}\left\|\alpha(k)\bar{h}(k)\right\|\|e_v(k)\|\leq L^2\alpha^2(k).
\end{align*}
Thus, using the preceeding relations into Eq.\ \eqref{apx_lem_projection:Eq2} immediately gives Eq.\ \eqref{apx_lem_projection:opt}.
\end{proof}

\subsection{Proof of Lemma \ref{lem:consensus}}

\begin{lem}\label{apx_lem:consensus}
Suppose that Assumptions \ref{assump:BConnectivity}--\ref{assump:Xset} hold. Let $\theta_v(k)$, for all $v\in\Vcal$, be generated by Algorithm \ref{alg:DTD0}. In addition, let $\{\alpha(k)\}$ be a nonnegative nonincreasing sequence of stepsizes. Then for some positive constant $L$ we have
\begin{enumerate}[leftmargin = 5mm]
\item The consensus error $\Qbf\Theta(k)$ satisfies
\begin{align}
\hspace{-0.8cm}\|\Qbf\Theta(k)\| \leq \frac{1}{\eta}\delta^{k }\|\Theta(0)\| + \frac{2L}{\eta}\sum_{t=0}^{k-1}\delta^{(k-1-t)}\alpha(t).\label{apx_lem_consensus:MainIneq}   
\end{align}
\item In addition, we obtain 
\begin{align}
\hspace{-0.5cm}\sum_{t=0}^{k}\alpha(t)\|\Qbf\Theta(t)\| \leq \frac{\alpha(0)\|\Theta(0)\|}{\eta(1-\delta)} + \frac{2L}{\eta(1-\delta)}\sum_{t=0}^k\alpha^2(t).    \label{apx_lem_consensus:FiniteBound}
\end{align}
\end{enumerate}
\end{lem}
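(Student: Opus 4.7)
The plan is to unroll the matrix recursion \eqref{analysis:Theta_k} and invoke the block-contraction bound \eqref{const:Bmix}. Writing $D(t) := \alpha(t)(\Hbf(t)+\Mbf(t)) - \Ebf(t)$ compresses the update to $\Theta(k+1) = \Wbf(k)\Theta(k) + D(k)$, and a straightforward induction yields
\begin{align*}
\Theta(k) = P(k-1,0)\,\Theta(0) + \sum_{t=0}^{k-1} P(k-1,t+1)\,D(t),
\end{align*}
where $P(a,b):=\Wbf(a)\Wbf(a-1)\cdots\Wbf(b)$ for $a\ge b$ and $P(a,a+1):=\Ibf$. Because each $\Wbf(k)$ is doubly stochastic, a one-line calculation shows $\Qbf\Wbf(k)=\Wbf(k)\Qbf$, so $\Qbf$ commutes with every $P(a,b)$. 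Multiplying the display by $\Qbf$ and applying the triangle inequality bounds $\|\Qbf\Theta(k)\|$ by $\|P(k-1,0)\Qbf\Theta(0)\|$ plus $\sum_{t=0}^{k-1}\|P(k-1,t+1)\Qbf D(t)\|$.

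Next I would upgrade the block-contraction \eqref{const:Bmix} to a per-step geometric decay. For a product of length $k-t$, write $k-t=q\Bcal+r$ with $0\le r<\Bcal$, apply \eqref{const:Bmix} $q$ times, and use $\|\Wbf(\cdot)\|\le 1$ (doubly stochastic matrices have spectral norm $1$) on the residual $r$ matrices to get $\|P(k-1,t)\Qbf\Theta\|\le\eta^{q}\|\Theta\|\le\delta^{k-t}\|\Theta\|/\eta$, using $\delta=\eta^{1/\Bcal}$. Combining this with the projection-error bound $\|\Ebf(t)\|\le L\alpha(t)$ from Lemma \ref{apx_lem:projection} and the uniform bound $\|\Hbf(t)+\Mbf(t)\|\le L$ coming from Assumptions \ref{assump:reward}--\ref{assump:Xset} gives $\|D(t)\|\le 2L\alpha(t)$. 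Substituting these estimates produces \eqref{apx_lem_consensus:MainIneq}.

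For \eqref{apx_lem_consensus:FiniteBound}, I would multiply \eqref{apx_lem_consensus:MainIneq} by $\alpha(k)$ and sum over $k=0,\dots,K$. The leading term is $\sum_k\alpha(k)\delta^k\|\Theta(0)\|/\eta\le\alpha(0)\|\Theta(0)\|/(\eta(1-\delta))$ by monotonicity and the geometric series. The double sum $\sum_k\alpha(k)\sum_{s<k}\delta^{k-1-s}\alpha(s)$ is handled by swapping the order of summation; the inner tail $\sum_{k>s}\delta^{k-1-s}$ is bounded by $1/(1-\delta)$, and the nonincreasing property $\alpha(k)\le\alpha(s)$ for $k\ge s$ gives $\alpha(k)\alpha(s)\le\alpha(s)^2$, yielding $\sum_s\alpha(s)^2/(1-\delta)$.

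The main technical step is the first half of the second paragraph: translating the block-contraction bound \eqref{const:Bmix}, which only guarantees a contraction factor of $\eta$ over complete blocks of $\Bcal$ matrices, into a uniform per-step rate $\delta^{k-t}/\eta$ valid for arbitrary product length. The prefactor $1/\eta$ exists precisely to absorb the up-to-$\Bcal$ leftover matrices of the partial block, and the choice $\delta:=\eta^{1/\Bcal}$ is what makes the resulting geometric envelope dimensionally consistent. Everything else reduces to routine geometric-series estimates together with the monotonicity of the stepsize sequence.
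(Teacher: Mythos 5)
Your proposal is correct and follows essentially the same route as the paper: both derive the recursion $\Qbf\Theta(k+1)=\Wbf(k)\Qbf\Theta(k)+\alpha(k)\Qbf(\Hbf(k)+\Mbf(k))-\Qbf\Ebf(k)$ (you via commuting $\Qbf$ past the doubly stochastic $\Wbf(k)$, the paper via subtracting the row-average update), unroll it, convert the block contraction \eqref{const:Bmix} into the envelope $\eta^{\lfloor(k-t)/\Bcal\rfloor}\le\delta^{k-t}/\eta$, and finish with the bounds $\|\Ebf(t)\|\le L\alpha(t)$, $\|\Hbf(t)+\Mbf(t)\|\le L$ and the same summation swap for the second claim. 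No gaps.
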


\begin{proof}
We first show Eq. \eqref{apx_lem_consensus:MainIneq}. Since $\Wbf(k)$ are doubly stochastic we have 
\begin{align*}
    \Wbf(k)\Theta(k) - \1\btheta(k)^T = \Wbf(k)\Big( \Theta(k) - \1\btheta(k)^T\Big). 
\end{align*}
By Eqs.\ \eqref{analysis:Theta_k} and \eqref{analysis:theta_bar} we consider
\begin{align*}
&\Qbf\Theta(k+1) = \Theta(k+1) - \1\btheta(k+1)^T\notag\\
&= \Wbf(k)\Theta(k) + \alpha(k)\Big[\Hbf(k) + \Mbf(k) \Big] - \Ebf(k)\notag\\
&\qquad - \Big[\1\btheta(k)^T +  \alpha(k)\1\Big[\bar{h}(k) + \bar{M}(k) \Big]^T + \1\ebar(k)^T\Big]\notag\\
&= \Wbf(k) \Big( \Theta(k) - \1\btheta(k)^T\Big) + \alpha(k)\Big(\Hbf(k) - \1\bar{h}(k)^T\Big)\notag\\
&\qquad + \alpha(k)\Big(\Mbf(k) - \1\bar{M}(k)^T\Big)- (\Ebf(k)-\1\ebar(k)^T)\notag\\
&= \Wbf(k)\Qbf\Theta(k) + \alpha(k)\Qbf\Big(\Hbf(k) + \Mbf(k)\Big) - \Qbf\Ebf(k)\notag\\
&= \prod_{t=0}^{k}\Wbf(t)\Qbf\Theta(0)- \sum_{t=0}^{k}\prod_{\ell = t+1}^k \Wbf(\ell)\Qbf\Ebf(t)\notag\\ 
&\qquad + \sum_{t=0}^{k}\alpha(t)\prod_{\ell = t+1}^k \Wbf(\ell)\Qbf\Big(\Hbf(t) + \Mbf(t)\Big),
\end{align*}
where in the third equality we use the definition of $\Qbf = \Ibf - 1/N\,\1\1^T$. Taking the Frobenius norm on both sides of the equation above and using the triangle inequality give
\begin{align}
&\|\Qbf\Theta(k+1)\|\notag\\ 
&\leq \left\|\prod_{t=0}^{k}\Wbf(t)\Theta(0)\right\|+ \sum_{t=0}^{k}\left\|\prod_{\ell = t+1}^k \Wbf(\ell)\Qbf\Ebf(t)\right\|\notag\\ 
&\qquad + \sum_{t=0}^{k}\alpha(t)\left\|\prod_{\ell = t+1}^k \Wbf(\ell)\Qbf\Big(\Hbf(t) + \Mbf(t)\Big)\right\|\notag\\
&\stackrel{\eqref{const:Bmix}}{\leq}\eta^{\lfloor (k+1) / \Bcal \rfloor}\|\Theta(0)\| + \sum_{t=0}^{k}\eta^{\lfloor (k-t)/\Bcal \rfloor }\|\Ebf(t)\|\notag\\ 
&\qquad + \sum_{t=0}^{k}\eta^{\lfloor (k-t) / \Bcal\rfloor }\alpha(t)\|\Hbf(t)+\Mbf(t)\|\notag\\
&\leq\frac{1}{\eta}\delta^{(k+1)}\|\Theta(0)\| + \frac{1}{\eta}\sum_{t=0}^{k}\delta^{(k-t)}\|\Ebf(t)\|\notag\\ 
&\qquad + \frac{1}{\eta}\sum_{t=0}^{k}\delta^{ (k-t) }\alpha(t)\|\Hbf(t)+\Mbf(t)\|,
\label{apx_lem_consensus:Eq1}
\end{align}
where in the second inequality we also use the spectral of $\Qbf$ is less than $1$. The last inequality follows from noting that $\eta^{\lfloor (k+1) / \Bcal \rfloor} \leq \eta^{(k+1) / \Bcal -1} \leq \delta^{(k+1)}/\eta$.
Thus, using Eqs.\ \eqref{apx_lem_projection:Eq1a} and \eqref{apx_lem_projection:Eq1b} into Eq.\ \eqref{apx_lem_consensus:Eq1} yields Eq.\ \eqref{apx_lem_consensus:MainIneq}. 

Second, multiply both sides of Eq.\ \eqref{apx_lem_consensus:MainIneq} by $\alpha(k)$ we obtain
\begin{align*}
&\alpha(k)\|\Qbf\Theta(k)\|\notag\\
&\qquad\leq \frac{\|\Theta(0)\|\alpha(k)}{\eta}\delta^{ k} + \frac{2L\alpha(k)}{\eta}\sum_{t=0}^{k-1}\delta^{(k-1-t)}\alpha(t)\notag\\ 
&\qquad\leq \frac{\alpha(0)\|\Theta(0)\|}{\eta}\delta^{ k}+ \frac{2L}{\eta}\sum_{t=0}^{k-1}\delta^{(k-1-t)}\alpha^2(t),
\end{align*}
where the last inequality is due to the nonincreasing of $\alpha(k)$. Summing up both sides of the preceding equation over $k$ from $0$ to $K$ for some $K\geq0$ we obtain Eq.\ \eqref{apx_lem_consensus:FiniteBound}, i.e.,
\begin{align*}
&\sum_{k=0}^{K}\alpha(k)\|\Qbf\Theta(k)\|\notag\\
&\leq  \frac{\alpha(0)\|\Theta(0)\|}{\eta}\sum_{k=0}^{K}\delta^{k} + \frac{2L}{\eta}\sum_{k=0}^{K}\sum_{t=0}^{k-1}\delta^{ (k-1-t)}\alpha^2(t)\notag\\
&\leq \frac{\alpha(0)\|\Theta(0)\|}{\eta(1-\delta)} + \frac{2L}{\eta}\sum_{t=0}^{K-1}\alpha^2(t)\sum_{k=t+1}^{K}\delta^{k}\notag\\
&\leq \frac{\alpha(0)\|\Theta(0)\|}{\eta(1-\delta)} + \frac{2L}{\eta(1-\delta)}\sum_{t=0}^K\alpha^2(t).
\end{align*}
\end{proof}
\begin{remark}
Note that if $\lim_{k\rightarrow\infty}\alpha(k) = 0$, Eq.\ \eqref{apx_lem_consensus:MainIneq} also implies that the agents achieve a consensus on their estimates. Indeed, using Eq.\ \eqref{apx_lem_consensus:MainIneq} while ignoring the factor $\delta^{1/\Bcal}$
\begin{align*}
&\|\Qbf\Theta(k+1)\| \leq  \delta^{k+1}\|\Theta(0)\| + 2L\sum_{t=0}^{k}\delta^{k-t}\alpha(t)\notag\\
&= \delta^{k+1}\|\Theta(0)\|\notag\\ 
&\qquad + 2L\left[\sum_{t=0}^{\lfloor k/2 \rfloor}\delta^{k-t}\alpha(t) + \sum_{t=\lceil k/2 \rceil}^{k}\delta^{k-t}\alpha(t) \right]\notag\\
&\leq \delta^{k+1}\|\Theta(0)\|\notag\\ 
&\qquad + 2L\left[\alpha(0)\sum_{t=0}^{\lfloor k/2 \rfloor}\delta^{k-t} +\alpha(\lceil k/2 \rceil) \sum_{t=\lceil k/2 \rceil}^{k}\delta^{k-t}\right]\notag\\
&\leq \delta^{k+1}\|\Theta(0)\| + 2L\left[\frac{\alpha(0)}{1-\delta}\delta^{\lceil k/2 \rceil} +\frac{\alpha(\lceil k/2 \rceil)}{1-\delta}\right],
\end{align*}
which implies that $\lim_{k\rightarrow\infty}\|\Qbf\Theta(k)\| = 0$. In the preceding equation, we use the nonnegativity and nonincreasing property of $\alpha(k)$ in the first inequality. 
\end{remark}

\subsection{Proof of Lemma \ref{lem:opt}}
We now proceed to show Lemma \ref{lem:opt} as restated as follows.  
\begin{lem}\label{apx_lem:opt}
Suppose that Assumptions \ref{assump:BConnectivity}--\ref{assump:Xset} hold. Let $\theta_v(k)$, for all $v\in\Vcal$, be generated by Algorithm \ref{alg:DTD0}. In addition, let $\{\alpha(k)\}$ be a nonnegative nonincreasing sequence of stepsizes. Then there exists a positive constant $L$ such that
\begin{align}
&\Eset\left[\|\btheta(k+1)-\theta^*\|^2\right]\notag\\ 
&\;\leq \Eset\left[\|\btheta(k)-\theta^*\|^2\right]+  \frac{2L}{N}\alpha(k)\Eset\left[\left\|\Qbf\Theta(k)\right\|\right]\notag\\
&\qquad + \frac{4L^2\alpha^2(k)}{N}+ 2\alpha(k)\Eset\left[(\btheta(k)-\theta^*)^T\bar{h}(k)\right].\label{apx_lem_opt:Ineq} 
\end{align}
\end{lem}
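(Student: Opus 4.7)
The plan is to expand $\|\btheta(k+1)-\theta^*\|^2$ using the averaged dynamics \eqref{analysis:theta_bar}, peel off the three resulting inner-product cross terms, and invoke Lemma~\ref{apx_lem:projection} on the projection-error contribution. Concretely, subtract $\theta^*$ from both sides of \eqref{analysis:theta_bar} and square to obtain
\begin{align*}
\|\btheta(k+1)-\theta^*\|^2 &= \|\btheta(k)-\theta^*\|^2 + 2\alpha(k)(\btheta(k)-\theta^*)^T\bar{h}(k)\\
&\quad + 2\alpha(k)(\btheta(k)-\theta^*)^T\bar{M}(k) - 2(\btheta(k)-\theta^*)^T\bar{e}(k)\\
&\quad + \bigl\|\alpha(k)(\bar{h}(k)+\bar{M}(k)) - \bar{e}(k)\bigr\|^2.
\end{align*}
The first line already supplies the two terms $\|\btheta(k)-\theta^*\|^2$ and $2\alpha(k)(\btheta(k)-\theta^*)^T\bar{h}(k)$ that appear in \eqref{apx_lem_opt:Ineq}. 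The $\bar{M}(k)$ cross term vanishes in expectation: conditioning on the $\sigma$-field $\Fcal_k$ generated by $\{\theta_v(t):v\in\Vcal,\,t\le k\}$ and using the i.i.d.\ sampling assumption yields $\Eset[M_v(k)\mid\Fcal_k]=0$ (immediate from \eqref{notation:A_bv}--\eqref{analysis:hM}), while $\btheta(k)-\theta^*$ is $\Fcal_k$-measurable.

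For the projection-error contribution, I would add and subtract $\alpha(k)\bar{h}(k)$ to recast it in the form required by Lemma~\ref{apx_lem:projection}:
\[
-2(\btheta(k)-\theta^*)^T\bar{e}(k) = -2\bigl(\btheta(k)-\theta^*+\alpha(k)\bar{h}(k)\bigr)^T\bar{e}(k) + 2\alpha(k)\bar{h}(k)^T\bar{e}(k).
\]
Applying Lemma~\ref{apx_lem:projection} (multiplied by two) to the first summand produces exactly $\tfrac{2L}{N}\alpha(k)\|\Qbf\Theta(k)\| + \tfrac{4L^2}{N}\alpha^2(k) - \tfrac{2}{N}\|\Ebf(k)\|^2$, which already contains the two main error pieces of the claim. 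The leftover $2\alpha(k)\bar{h}(k)^T\bar{e}(k)$, together with the quadratic term $\|\alpha(k)(\bar{h}(k)+\bar{M}(k))-\bar{e}(k)\|^2$ after expansion, is bounded by direct Cauchy--Schwarz using the uniform bounds $\|\bar{h}(k)\|,\|\bar{M}(k)\|\le L/N$ and $\|\bar{e}(k)\|\le L\alpha(k)/N$ inherited from \eqref{apx_lem_projection:ev_bound} and \eqref{apx_lem_projection:Eq1a}. The residual $\|\bar{e}(k)\|^2$ that appears from this expansion is absorbed into the $-\tfrac{2}{N}\|\Ebf(k)\|^2$ slack provided by Lemma~\ref{apx_lem:projection} via the Jensen-type inequality $\|\bar{e}(k)\|^2\le\tfrac{1}{N}\|\Ebf(k)\|^2$.

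The main obstacle I foresee is the cross term $\bar{M}(k)^T\bar{e}(k)$ hiding inside the quadratic piece: since $\bar{e}(k)$ is a nonlinear (projection) function of the new sample through \eqref{analysis:Theta_k}, this cross term does \emph{not} have zero conditional mean, so the martingale argument that killed $(\btheta(k)-\theta^*)^T\bar{M}(k)$ no longer applies, and it must instead be controlled deterministically by Cauchy--Schwarz plus the uniform bounds above. The other delicate item is simply bookkeeping: one has to verify that all the $O(\alpha^2(k)/N^2)$ residuals produced this way fit inside the main $\tfrac{4L^2}{N}\alpha^2(k)$ budget (true for $N\ge 1$ since $1/N^2\le 1/N$). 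Once these pieces are assembled and total expectation is taken, the inequality \eqref{apx_lem_opt:Ineq} drops out.
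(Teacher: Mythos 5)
Your proposal follows essentially the same route as the paper's proof: expand $\|\btheta(k+1)-\theta^*\|^2$ via \eqref{analysis:theta_bar}, kill the $(\btheta(k)-\theta^*)^T\bar{M}(k)$ cross term by the martingale property conditional on $\Fcal_k$, recast the projection-error term as $-2(\btheta(k)-\theta^*+\alpha(k)\bar{h}(k))^T\bar{e}(k)$ to invoke Lemma~\ref{apx_lem:projection}, and absorb the remaining $O(\alpha^2(k))$ residuals and $\|\bar{e}(k)\|^2\le\frac{1}{N}\|\Ebf(k)\|^2$ into the slack (modulo harmlessly enlarging the constant $L$, a looseness the paper's own bookkeeping shares). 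You are in fact slightly more careful than the paper in noting that the $\bar{M}(k)^T\bar{e}(k)$ cross term does not have zero conditional mean and must be handled deterministically.
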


\begin{proof}
Recall that $\Abf\theta^* = 1/N\left(\sum_{v\in\Vcal}b_v\right)$ for a given fixed point $\theta^*$ of the projected Bellman equation. In addition, let $L = \sum_{v\in\Vcal}L_v$, where $L_v$ is given in Eq.\ \eqref{apx_lem_projection:Eq1a}. 
First, Eq.\ \eqref{analysis:theta_bar} yields
\begin{align}
&\|\btheta(k+1)-\theta^*\|^2 \notag\\ 
&= \|\btheta(k)-\theta^* + \alpha(k)\bar{h}(k) + \alpha(k)\bar{m}(k) - \ebar(k)\|^2\notag\\ 
&= \|\btheta(k)-\theta^*\|^2 + \left\|\alpha(k)\Big(\bar{h}(k) +\bar{m}(k)\Big)\right\|^2 + \|\ebar(k)\|^2 \notag\\
&\qquad + 2\alpha(k)(\btheta(k)-\theta^*)^T\Big(\bar{h}(k) +\bar{m}(k)\Big) \notag\\
&\qquad -2(\btheta(k)-\theta^*)^T\ebar(k) - 2\alpha(k)\Big(\bar{h}(k) +\bar{m}(k)\Big)^T\ebar(k).\label{apx_lem:Eq1}
\end{align}
Since the data are sampled i.i.d from $\pi$ and under Assumptions \ref{assump:reward}-- \ref{assump:Xset} we have $\{M_v(k)\}$ in Eq.\ \eqref{analysis:hM} is the sequence of i.i.d Martingale difference noise, i.e.,
\begin{align*}
\Eset[M_v(k)\,|\,\Fcal_{k}] = 0,
\end{align*}
where $\Fcal_{k}$ is the filtration contains all the variables $\theta_v(k)$ generated by Algorithm \ref{alg:DTD0} up to time $k$, i.e., $\Fcal_{k}$ is the $\sigma$-algebra generated by $$\{\Theta(0)\ldots,\Theta(k)\}.$$ Thus, taking the conditional expectation on both sides of Eq.\ \eqref{apx_lem:Eq1} w.r.t $\Fcal_k$ we obtain
\begin{align*}
&\Eset\left[\|\btheta(k+1)-\theta^*\|^2\,|\,\Fcal_k\right]\notag\\ 
&= \|\btheta(k)-\theta^*\|^2 + \left\|\alpha(k)\Big(\bar{h}(k) +\bar{m}(k)\Big)\right\|^2 + \|\ebar(k)\|^2 \notag\\
&\qquad + 2\alpha(k)(\btheta(k)-\theta^*)^T\bar{h}(k)\notag\\ 
&\qquad -2\Big(\btheta(k)-\theta^*+\alpha(k)\bar{h}(k)\Big)^T\ebar(k),
\end{align*}
which by taking the expectation on both sides, and using Eqs.\ \eqref{apx_lem_projection:ev_bound}--\eqref{apx_lem_projection:Eq1a} and the Cauchy-Schwarz inequality implies Eq.\ \eqref{apx_lem_opt:Ineq}
\begin{align*}
&\Eset\left[\|\btheta(k+1)-\theta^*\|^2\right]\notag\\ 
&\leq \|\btheta(k)-\theta^*\|^2 + \frac{4L^2}{N}\alpha^2(k)+ \frac{2L}{N}\alpha(k)\left\|\Qbf\Theta(k)\right\| \notag\\
&\qquad  + 2\alpha(k)(\btheta(k)-\theta^*)^T\bar{h}(k). 
\end{align*}
\end{proof}

\end{document}